\newcounter{thmctr}
\newtheorem{thm}[thmctr]{Theorem}
\newtheorem{lemma}[thmctr]{Lemma}
\newtheorem{prop}[thmctr]{Proposition}
\newtheorem{cor}[thmctr]{Corollary}
\newtheorem*{definition}{Definition}
\theoremstyle{definition}
\newtheorem*{remarks}{Remarks}
\theoremstyle{plain}
    \newenvironment{notarxiv}{\comment}{\endcomment}
    \newenvironment{onlyarxiv}{}{}
    \newenvironment{notarxiv}{}{}
    \newenvironment{onlyarxiv}{\comment}{\endcomment}
\tikzstyle{pathscale}=[]
\tikzstyle{vertex}=[circle,fill=black,inner sep=2pt]
\tikzstyle{vertrect}=[draw,rectangle,inner sep=2pt]
\tikzstyle{vertdia}=[draw,diamond,inner sep=2pt]
\newcommand{\stocnl}{}
\newcommand{\stocnla}{}
\newcommand{\prooftext}{Proof\xspace}
\newcommand{\dotp}[2]{\left< #1, #2 \right>}
\newcommand{\propeig}[1]{\texttt{Eig}[#1]\xspace}
\newcommand{\propexpand}[1]{\texttt{Expand}[#1]\xspace}
\newcommand{\propcount}[1]{\texttt{Count}[#1]\xspace}
\newcommand{\propcycle}[2][4]{\texttt{Cycle$_{#1}$}[#2]\xspace}
\newcommand{\dhruvuni}{University of Illinois at Chicago \\ mubayi@math.uic.edu}
\newcommand{\johnuni}{University of Illinois at Chicago \\ lenz@math.uic.edu}
\newcommand{\dhruvfoot}{\footnote{Research supported in part by  NSF Grants 0969092 and 1300138.}}
\newcommand{\johnfoot}{\footnote{Research partly supported by NSA Grant H98230-13-1-0224.}}
\title{Eigenvalues and Linear Quasirandom Hypergraphs}
\author{John Lenz \johnfoot \\ \johnuni \and Dhruv Mubayi \dhruvfoot \\ \dhruvuni}
\begin{document}
\maketitle

\renewcommand{\thefootnote}{\fnsymbol{footnote}} 
\footnotetext{\emph{2010 Mathematics Subject Classification.} Primary 05C80; Secondary 05C50, 05C65.}     
\footnotetext{\emph{Key words and phrases.} quasirandom, hypergraph, eigenvalue.}     
\renewcommand{\thefootnote}{\arabic{footnote}}

\begin{abstract}
  Let $p(k)$ denote the partition function of $k$.  For each $k \ge 2$,  we describe a list of $p(k)
  -1$ quasirandom properties that a $k$-uniform hypergraph can have.  Our work connects previous
  notions on linear hypergraph quasirandomness of Kohayakawa-R\"odl-Skokan and
  Conlon-H\`{a}n-Person-Schacht and the spectral approach of Friedman-Wigderson.  For each of the
  quasirandom properties that are described, we define a largest and second largest eigenvalue.  We
  show that a hypergraph satisfies these quasirandom properties if and only if it has a large
  spectral gap.  This answers a question of Conlon-H\`{a}n-Person-Schacht.  Our work can be viewed
  as a partial extension to hypergraphs of the seminal spectral results of Chung-Graham-Wilson for
  graphs.
\end{abstract}

\section{Introduction} 
\label{sec:Introduction}

The study of quasirandom or pseudorandom graphs was initiated by Thomason~\cite{qsi-thomason87,
qsi-thomason87-2} and then refined by Chung, Graham, and Wilson~\cite{qsi-chung89}, resulting in a
list of equivalent (deterministic) properties of graph sequences which are inspired by $G(n,p)$.
Beginning with these foundational papers on the
subject~\cite{qsi-chung89,qsi-thomason87,qsi-thomason87-2}, the last two decades have seen an
explosive growth in the study of quasirandom structures in mathematics and computer science.  For
details on quasirandomness, we refer the reader to a survey of Krivelevich and
Sudakov~\cite{qsi-survey-krivelevich06} for graphs and recent papers of
Gowers~\cite{hqsi-gowers06,rrl-gowers07,qsi-gowers08} for general quasirandom structures including
hypergraphs.

\subsection{Previous Results} 
\label{sub:introprev}

The core of what Chung, Graham, and Wilson~\cite{qsi-chung89} proved is that several properties of
graph sequences are equivalent. Two of them are \texttt{Disc} and \texttt{Count[All]}.  The first
states that all sufficiently large vertex sets have the same edge density as the original graph and
the second states that for all fixed graphs $F$ the number of copies of $F$ is what one would expect
in a random graph with the same density.

A \emph{$k$-uniform hypergraph} is a pair of finite sets $(V(H),E(H))$ where $E(H) \subseteq
\binom{V(H)}{k}$ is a collection of $k$-subsets of $V(H)$.  For $U \subseteq V(H)$, the induced
subgraph on $U$, denoted $H[U]$, is the hypergraph with vertex set $U$ and edge set $\{ e \in E(H) :
e \subseteq U \}$.  If $F$ and $G$ are hypergraphs, a \emph{labeled copy of $F$ in $H$} is an
edge-preserving injection $V(F) \rightarrow V(H)$, i.e.\ an injection $\alpha : V(F) \rightarrow
V(H)$ such that if $E$ is an edge of $F$, then $\{ \alpha(x) : x \in E \}$ is an edge of $H$.  A
\emph{graph} is a $2$-uniform hypergraph.

Almost immediately after proving their theorem, Chung and Graham~\cite{hqsi-chung12, hqsi-chung90,
hqsi-chung90-2, hqsi-chung91, hqsi-chung92} investigated generalizing the theorem to $k$-uniform
hypergraphs. One initial difficulty in generalizing quasirandomness to $k > 2$ is an observation by
R\"odl that a construction of Erd\H{o}s and Hajnal~\cite{ram-erdos72} shows that the hypergraph
generalizations of \texttt{Disc} and \texttt{Count[All]} are not equivalent.  Motivated by this,
Chung and Graham~\cite{hqsi-chung90, hqsi-chung90-2, hqsi-chung91, hqsi-chung92} investigated how to
strengthen the property \texttt{Disc} to make it equivalent to \texttt{Count[All]}.  They found
several properties equivalent to \texttt{Count[All]}; the main property they use is related to the
number of even/odd subgraphs of a given hypergraph which they called \texttt{Deviation}.
Simultaneously, Frankl and R\"odl~\cite{rrl-frankl92} also obtained a property stronger than
\texttt{Disc} which is equivalent to \texttt{Count[All]}.  Subsequently, other properties equivalent
to \texttt{Count[All]} have been studied by several
researchers~\cite{hqsi-austin10,hqsi-gowers06,hqsi-keevash09,hqsi-kohayakawa02}.

It remained open whether the simpler property \texttt{Disc} for $k$-uniform hypergraphs is
equivalent to counting some class of hypergraphs or counting a single substructure.  This is related
to the Weak Hypergraph Regularity Lemma~\cite{rrl-chung91, rrl-frankl92, rrl-steger90}.  Recently,
Kohayakawa, Nagle, R\"odl, and Schacht~\cite{hqsi-kohayakawa10} answered this question by showing
that \texttt{Disc} is equivalent to counting the family of linear hypergraphs, where a hypergraph
$H$ is \emph{linear} if every pair of distinct edges share at most one vertex.  Building on this,
Conlon, H\`{a}n, Person, and Schacht~\cite{hqsi-conlon12} showed that \texttt{Disc} is equivalent to
counting a type of linear four cycle.   These two results can be combined into the following
theorem.

\begin{thm} \label{thm:conlonhyperquasibottom}
  (\textbf{Kohayakawa-Nagle-R\"odl-Schacht~\cite{hqsi-kohayakawa10} and 
  Conlon-H\`{a}n-Person-Schacht~\cite{hqsi-conlon12}}) Let $0 < p < 1$ be a fixed constant and let
  $\mathcal{H} = \{H_n\}_{n\rightarrow\infty}$ be a sequence of $k$-uniform hypergraphs such that
  $|V(H_n)| = n$ and $|E(H_n)| \geq p \binom{n}{k} + o(n^k)$.  The following properties are
  equivalent:
  \begin{itemize}
    \item \texttt{Disc}: For every $U \subseteq V(H_n)$, $|E(H_n[U])| = p \binom{|U|}{k} + o(n^k)$.

    \item \texttt{Count[linear]}: For every fixed linear $k$-uniform hypergraph $F$ with $e$ edges
      and $f$ vertices, the number of labeled copies of $F$ in $H_n$ is $p^e n^f + o(n^f)$.

    \item \texttt{Cycle$_4$}: The number of labeled copies of $C_4$ in $H_n$ is at most
      $p^{|E(C_4)|} n^{|V(C_4)|} + o(n^{|V(C_4)|})$, where $C_4$ is a linear hypergraph defined precisely
      in Section~\ref{sub:cycledefinion}.
  \end{itemize}
\end{thm}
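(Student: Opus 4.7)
My plan is to prove the equivalence by a cyclic chain of implications: \texttt{Count[linear]} $\Rightarrow$ \texttt{Cycle}$_4$ $\Rightarrow$ \texttt{Disc} $\Rightarrow$ \texttt{Count[linear]}. The first implication is essentially immediate: since $C_4$ is a particular fixed linear $k$-uniform hypergraph, \texttt{Count[linear]} applied with $F = C_4$ yields the asymptotic equality $p^{|E(C_4)|} n^{|V(C_4)|} + o(n^{|V(C_4)|})$, which trivially satisfies the upper bound demanded by \texttt{Cycle}$_4$.

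For \texttt{Cycle}$_4$ $\Rightarrow$ \texttt{Disc}, the plan is a defect Cauchy--Schwarz argument in the spirit of the classical Chung--Graham--Wilson $C_4$ argument for graphs. The specific $C_4$ defined in Section~\ref{sub:cycledefinion} should be set up so that the number of labeled copies of $C_4$ in $H_n$ equals, up to symmetry factors, a sum of squares of suitable ``codegrees'' of $H_n$. By iterated Cauchy--Schwarz (first on codegrees, then on degrees), this sum of squares is bounded below by a quantity determined solely by the edge count, with equality up to $o(1)$ only when these codegrees are tightly concentrated around their mean. The hypothesis of \texttt{Cycle}$_4$ forces near-equality, so the codegrees must be $L^2$-concentrated. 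A standard amplification argument, passing to a witness subset $U$ of maximum density deviation, upgrades the $L^2$ concentration to the $L^\infty$ statement required by \texttt{Disc}.

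For \texttt{Disc} $\Rightarrow$ \texttt{Count[linear]}, I would apply the Weak Hypergraph Regularity Lemma to obtain a partition $V(H_n) = V_1 \cup \cdots \cup V_t$ in which all but an $\varepsilon$-fraction of $k$-tuples of parts are $\varepsilon$-regular. To count labeled copies of a fixed linear $F$, sum over all maps $\varphi : V(F) \rightarrow [t]$ and, for each $\varphi$, apply a counting lemma for linear hypergraphs. The counting lemma proceeds by embedding the vertices of $F$ one at a time in a carefully chosen order on the edges of $F$: because $F$ is linear, each new edge shares few vertices with those already placed, so its contribution is an essentially independent multiplicative factor of $p$ (up to $o(1)$), controlled by the regularity of the corresponding tuple of parts. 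Multiplying these factors and summing over $\varphi$ yields $p^{|E(F)|} n^{|V(F)|} + o(n^{|V(F)|})$.

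The main obstacle is the \texttt{Cycle}$_4$ $\Rightarrow$ \texttt{Disc} step. The difficulty lies in choosing $C_4$ as a $k$-uniform hypergraph so that (a) its labeled count in $H_n$ genuinely decomposes as a clean second-moment quantity in codegree-like statistics, and (b) the $L^2$-type control this yields is strong enough to recover the $L^\infty$-type \texttt{Disc} statement uniformly over all $U \subseteq V(H_n)$. In graphs the right $C_4$ is obvious and the analysis is a one-line Cauchy--Schwarz; for hypergraphs, isolating a linear $C_4$ with the right intersection pattern so that both (a) and (b) go through is the genuinely substantive point, and is essentially the content of the Conlon--H\`{a}n--Person--Schacht contribution to the theorem.
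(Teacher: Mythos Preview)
The paper does not give a standalone proof of Theorem~\ref{thm:conlonhyperquasibottom}; it is quoted as a known result from~\cite{hqsi-kohayakawa10,hqsi-conlon12} and then recovered as the special case $\pi = 1+\cdots+1$ of the main Theorem~\ref{thm:mainequiv}. Your cyclic scheme \texttt{Count[linear]} $\Rightarrow$ \texttt{Cycle}$_4$ $\Rightarrow$ \texttt{Disc} $\Rightarrow$ \texttt{Count[linear]} is essentially the route of the original references and is a sound plan.

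Compared with what this paper actually does, there are two differences worth flagging. For \texttt{Disc} $\Rightarrow$ \texttt{Count[linear]} you invoke the Weak Hypergraph Regularity Lemma followed by a counting lemma. The paper instead reformulates \texttt{Disc} as the equivalent property \propexpand{$1+\cdots+1$} and applies a direct embedding argument (Proposition~\ref{prop:embeddinglemma}): one inducts on the edges of $F$, peeling off the last edge in the $\pi$-linear ordering and using \texttt{Expand} to control the number of extensions. Since \texttt{Disc} already says the whole hypergraph is globally ``regular'' in the weak sense, the partitioning step of the regularity lemma is superfluous here; your approach is correct but heavier than needed, and what survives of it is precisely the counting lemma, which coincides with the paper's induction. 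For \texttt{Cycle}$_4$ $\Rightarrow$ \texttt{Disc}, you outline the defect Cauchy--Schwarz / second-moment argument of Conlon--H\`{a}n--Person--Schacht. This paper takes a genuinely different route, passing through the new spectral property \propeig{$\pi$}: it proves \propcycle[4\ell]{$\pi$} $\Rightarrow$ \propeig{$\pi$} by a trace argument on powers of the multilinear adjacency map (Section~\ref{sec:c4toeigenvalue}) and then \propeig{$\pi$} $\Rightarrow$ \propexpand{$\pi$} via a hypergraph Expander Mixing Lemma (Theorem~\ref{thm:expandertensormixing}). This detour buys the paper its main contribution---the eigenvalue characterisation---at the cost of substantial algebraic setup, and in this paper the \texttt{Cycle} $\Rightarrow$ \texttt{Eig} step is only completed for coregular hypergraphs, the general case being deferred to~\cite{hqsi-lenz-quasi12-nonregular}. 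Your elementary route is self-contained and matches the cited sources rather than the present paper's spectral framework.
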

 
Note that Conlon et al.~\cite{hqsi-conlon12} put the condition ``$|E(H_n)| \geq p \binom{n}{k} +
o(n^k)$'' into the statement of the properties that don't trivially imply it like \texttt{Disc} and
this is equivalent to the way we have stated Theorem~\ref{thm:conlonhyperquasibottom}.  Conlon et
al.~\cite{hqsi-conlon12} have several more properties including induced subgraph counts and common
neighborhood sizes, but we consider the properties stated in
Theorem~\ref{thm:conlonhyperquasibottom} as the core properties.

\subsection{Our Results} 
\label{sub:introour}

Another graph property equivalent to \texttt{Disc} is \texttt{Eig}, which states that if $\mu_1$ and
$\mu_2$ are the first and second largest (in absolute value) eigenvalues of the adjacency matrix of
the graph respectively, then $\mu_2 = o(\mu_1)$.  Neither Chung and Graham~\cite{hqsi-chung90,
hqsi-chung90-2, hqsi-chung91, hqsi-chung92} nor Kohayakawa, R\"odl, and
Skokan~\cite{hqsi-kohayakawa02} provided a generalization of \texttt{Eig} to hypergraphs. Later,
Conlon, H\`{a}n, Person, and Schacht~\cite{hqsi-conlon12} asked whether there exists a
generalization of \texttt{Eig} to $k$-uniform hypergraphs which is equivalent to \texttt{Disc}. The
eigenvalue description of graph quasirandomness  has proved to be a very useful result to show that
certain explicitly constructed graphs are quasirandom
(see~\cite{ee-alon01,mr-alon05,mr-lenz11,ee-szabo03}).

This leads to our first main contribution. We define a generalization of \texttt{Eig} to $k$-uniform
hypergraphs and add it into the equivalences stated in Theorem~\ref{thm:conlonhyperquasibottom}.
This answers the aforementioned question of Conlon et al.~\cite{hqsi-conlon12}.

Our second contribution is to generalize Theorem~\ref{thm:conlonhyperquasibottom} to a slightly
larger class of hypergraphs. Let $k \geq 2$ be an integer and let $\pi$ be a proper partition of
$k$, by which we mean that $\pi$ is an unordered list of at least two positive integers whose sum is
$k$.  For the partition $\pi$ of $k$ given by $k = k_1 + \dots + k_t$, we will abuse notation by
saying that $\pi = k_1 + \dots + k_t$.  For every proper partition $\pi$, we define properties
\propexpand{$\pi$}, \propeig{$\pi$}, and \propcycle{$\pi$} and show that they are equivalent.

\begin{definition}
  Let $k \geq 2$ and let $\pi = k_1+\dots+k_t$ be a proper partition of $k$.  A $k$-uniform
  hypergraph $F$ is \emph{$\pi$-linear} if there exists an ordering $E_1,\dots,E_m$ of the edges of
  $F$ such that for every $i$, there exists a partition of the vertices of $E_i$ into
  $A_{i,1},\dots,A_{i,t}$ such that for $1 \leq s \leq t$, $|A_{i,s}|=k_s$ and for every $j < i$, there exists
  an $s$ such that $E_j \cap E_i \subseteq A_{i,s}$. 
\end{definition}

Our hypergraph eigenvalues are based on definitions of Friedman and
Wigderson~\cite{ee-friedman95,ee-friedman95-2} (see Section~\ref{sec:hypergrapheigenvalues}).  In
graphs, it is easier to study the eigenvalues of regular graphs (possibly with loops).  A similar
situation occurs for hypergraphs, so Friedman and Wigderson~\cite{ee-friedman95,ee-friedman95-2}
focused almost exclusively on the following notion of regular for hypergraphs.

\begin{definition}
  A \emph{$k$-uniform hypergraph with loops} $H$ consists of a finite set $V(H)$ and a collection
  $E(H)$ of $k$-element multisets of elements from $V(H)$.  Informally, every edge has size exactly
  $k$ but a vertex is allowed to be repeated inside of an edge.  A $k$-uniform hypergraph with loops
  $H$ is \emph{$d$-coregular} if for every $(k-1)$-multiset $S$, there are exactly $d$ edges which
  contain $S$.
\end{definition}

The following is our main theorem.

\begin{thm} \label{thm:mainequiv} \textbf{(Main Result)}
  Let $0 < p < 1$ be a fixed constant and let $\mathcal{H} = \{H_n\}_{n\rightarrow\infty}$ be a
  sequence of $k$-uniform hypergraphs with loops such that $|V(H_n)| = n$ and $H_n$ is $\left\lfloor
  pn \right\rfloor$-coregular.  Let $\pi = k_1 + \dots + k_t$ be a proper partition of $k$.  The
  following properties are equivalent:

  \begin{itemize}
    \item \propeig{$\pi$}: $\lambda_{1,\pi}(H_n) = pn^{k/2} + o(n^{k/2})$ and $\lambda_{2,\pi}(H_n)
      = o(n^{k/2})$, where $\lambda_{1,\pi}(H_n)$ and $\lambda_{2,\pi}(H_n)$ are the first and
      second largest eigenvalues of $H_n$ with respect to $\pi$, which are defined
      in Section~\ref{sec:hypergrapheigenvalues}.

    \item \propexpand{$\pi$}: For all $S_i \subseteq \binom{V(H_n)}{k_i}$ where $1 \leq i \leq t$,
      \begin{align*} e(S_1,\dots,S_t) =  p \prod_{i=1}^t \left| S_i \right| + o\left(n^{k}\right)
      \end{align*} where $e(S_1,\dots,S_t)$ is the number of tuples $(s_1,\dots,s_t)$ such
      that $s_1 \cup \dots \cup s_t$ is a hyperedge and $s_i \in S_i$.

    \item \propcount{$\pi$-linear}: If $F$ is an $f$-vertex, $m$-edge, $k$-uniform, $\pi$-linear
      hypergraph, then the number of labeled copies of $F$ in $H_n$ is $p^m n^f + o(n^f)$.

    \item \propcycle{$\pi$}: The number of labeled copies of $C_{\pi,4}$ in $H_n$ is at most
      $p^{|E(C_{\pi,4})|} n^{|V(C_{\pi,4})|} + o(n^{|V(C_{\pi,4})|})$, where $C_{\pi,4}$ is the
      hypergraph four cycle of type $\pi$ which is defined in Section~\ref{sub:cycledefinion}.

    \item \propcycle[4\ell]{$\pi$}:  the number of labeled copies of $C_{\pi,4\ell}$ in $H_n$ is at
      most $p^{|E(C_{\pi,4\ell})|} n^{|V(C_{\pi,4\ell})|} + \linebreak[1]
      o(n^{|V(C_{\pi,4\ell})|})$, where $C_{\pi,4\ell}$ is the hypergraph cycle of type $\pi$ and
      length $4\ell$ defined in Section~\ref{sub:cycledefinion}.
  \end{itemize}

  In fact, all implications above except \propcycle[4\ell]{$\pi$} $\Rightarrow$ \propeig{$\pi$} are
  true with the coregular condition replaced by the weaker condition that $|E(H_n)| \geq
  p\binom{n}{k} + o(n^k)$.
\end{thm}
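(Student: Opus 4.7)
The plan is to establish the five-way equivalence by proving a directed cycle of implications, following the Chung-Graham-Wilson template and its hypergraph adaptations by Kohayakawa-Nagle-R\"odl-Schacht and Conlon-H\`an-Person-Schacht. Specifically, I would prove \propexpand{$\pi$} $\Rightarrow$ \propcount{$\pi$-linear}, the trivial implications \propcount{$\pi$-linear} $\Rightarrow$ \propcycle{$\pi$} and \propcount{$\pi$-linear} $\Rightarrow$ \propcycle[4\ell]{$\pi$}, and close the combinatorial loop with \propcycle{$\pi$} $\Rightarrow$ \propexpand{$\pi$} and \propcycle[4\ell]{$\pi$} $\Rightarrow$ \propexpand{$\pi$}. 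Finally, I would incorporate the spectral property by proving \propeig{$\pi$} $\Rightarrow$ \propexpand{$\pi$}, which does not need coregularity, together with \propcycle[4\ell]{$\pi$} $\Rightarrow$ \propeig{$\pi$}, which is the single step where the coregular hypothesis is genuinely required.

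For \propexpand{$\pi$} $\Rightarrow$ \propcount{$\pi$-linear}, I would induct on the edge count $m$ of the target $\pi$-linear hypergraph $F$, using its prescribed edge ordering $E_1,\ldots,E_m$ and the partition of each $V(E_i)$ into blocks $A_{i,1},\ldots,A_{i,t}$ with $|A_{i,s}|=k_s$. The key structural fact is that any earlier edge $E_j$ meets $E_i$ inside a single block $A_{i,s(j)}$, so once a partial embedding of $E_1\cup\cdots\cup E_{i-1}$ is fixed the placements of the $t$ blocks of $E_i$ decouple into a product of feasible $k_s$-sets $S_{i,1},\ldots,S_{i,t}$; one application of \propexpand{$\pi$} then produces the correct main term and an $o(n^k)$ error, and $m$ such applications give the claim. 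The implications to \propcycle{$\pi$} and \propcycle[4\ell]{$\pi$} are then immediate once one verifies from Section~\ref{sub:cycledefinion} that $C_{\pi,4}$ and $C_{\pi,4\ell}$ are themselves $\pi$-linear.

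The analytic heart of the proof is \propcycle{$\pi$} $\Rightarrow$ \propexpand{$\pi$} and its $4\ell$ analog. I would write $e(S_1,\ldots,S_t)$ as a sum of indicators over ordered tuples of $k_i$-sets and apply Cauchy-Schwarz repeatedly, duplicating coordinates one block at a time, until the resulting expression equals (up to lower-order error) the labeled copy count of $C_{\pi,4}$ in $H_n$. The cycle hypothesis then supplies the matching upper bound, while the density assumption $|E(H_n)| \geq p\binom{n}{k} + o(n^k)$ forces the corresponding lower bound, pinning $e(S_1,\ldots,S_t)$ to the correct value. A parallel argument, with a longer Cauchy-Schwarz chain, handles the $4\ell$ case.

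The spectral step uses the eigenvalue framework of Friedman-Wigderson set up in Section~\ref{sec:hypergrapheigenvalues}. The implication \propeig{$\pi$} $\Rightarrow$ \propexpand{$\pi$} should be a hypergraph expander mixing lemma: decompose the indicator tensors of $S_1,\ldots,S_t$ into a component along the top eigenvector, which contributes the main term $p\prod_i|S_i|$, and an orthogonal component, whose contribution is at most $\lambda_{2,\pi}(H_n)\cdot\prod_i |S_i|^{1/2} = o(n^k)$. I expect the main obstacle to be \propcycle[4\ell]{$\pi$} $\Rightarrow$ \propeig{$\pi$}: here I would identify a suitable power of $\lambda_{2,\pi}(H_n)$ with the trace of an operator power whose combinatorial expansion is controlled by the labeled $C_{\pi,4\ell}$ count, using coregularity to guarantee that the ``trivial'' eigenvector contributes exactly $(pn^{k/2})^{4\ell}$ and can be subtracted cleanly from the trace. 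Choosing $\ell$ large enough in the cycle bound is what drives the second eigenvalue down to $o(n^{k/2})$, and carrying out this trace computation cleanly for an arbitrary partition $\pi$ is the most delicate part of the argument.
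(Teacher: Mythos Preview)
Your overall plan is correct and largely coincides with the paper's. The implications \propeig{$\pi$} $\Rightarrow$ \propexpand{$\pi$} (expander mixing lemma), \propexpand{$\pi$} $\Rightarrow$ \propcount{$\pi$-linear} (induction on edges using the $\pi$-linear ordering), \propcount{$\pi$-linear} $\Rightarrow$ \propcycle[4\ell]{$\pi$} (trivial, since $C_{\pi,4\ell}$ is $\pi$-linear), and \propcycle[4\ell]{$\pi$} $\Rightarrow$ \propeig{$\pi$} (trace of an operator power counts circuits; coregularity pins the top eigenvector) are exactly what the paper proves, and your descriptions of each are accurate.

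The one genuine difference is your extra step \propcycle{$\pi$} $\Rightarrow$ \propexpand{$\pi$} via iterated Cauchy--Schwarz. The paper does \emph{not} prove this directly; it closes the cycle only through the spectral route \propcycle[4\ell]{$\pi$} $\Rightarrow$ \propeig{$\pi$} $\Rightarrow$ \propexpand{$\pi$}. Your approach here is the Conlon--H\`an--Person--Schacht argument, generalized from $\pi=1+\cdots+1$ to arbitrary $\pi$; it is valid and has the advantage of establishing the purely combinatorial equivalences \propexpand{$\pi$} $\Leftrightarrow$ \propcount{$\pi$-linear} $\Leftrightarrow$ \propcycle{$\pi$} without coregularity and without any spectral machinery. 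The paper's route is more economical (one fewer implication) and highlights its novel contribution --- the spectral characterization --- but in this paper the non-coregular closure is actually deferred to a companion paper. Interestingly, the paper's Lemma on $|\phi(x_1,\ldots,x_t)|^2 \le |\phi^2(x_1\otimes x_1,\ldots)|$ \emph{is} Cauchy--Schwarz in multilinear-algebra dress, so the two routes are closer than they first appear.

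One small correction: in the trace argument for \propcycle[4\ell]{$\pi$} $\Rightarrow$ \propeig{$\pi$}, you do not need to ``choose $\ell$ large enough.'' Any fixed $\ell \ge 1$ suffices: once $\tr{A^{2\ell}} \le p^m n^{mk/2} + o(n^{mk/2})$ and $\mu_1^{2\ell} = p^m n^{mk/2}$, the nonnegativity of $\mu_i^{2\ell}$ immediately forces $\mu_2^{2\ell} = o(n^{mk/2})$, hence $\lambda_{2,\pi}(H_n)=o(n^{k/2})$.
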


\begin{remarks} \hspace{1cm}

  \begin{itemize}
    \item In a companion paper~\cite{hqsi-lenz-quasi12-nonregular}, we prove that
      \propcycle[4\ell]{$\pi$} $\Rightarrow$ \propeig{$\pi$} for all sequences $\mathcal{H} =
      \{H_n\}_{n\rightarrow\infty}$ where $H_n$ is a $k$-uniform hypergraph with loops, $|V(H_n)| =
      n$, and $|E(H_n)| \geq p \binom{n}{k} + o(n^k)$.

    \item Following Chung, Graham, and Wilson~\cite{qsi-chung89}, our results extend to sequences
      which are not defined for every $n$ as follows. Let $\mathcal{H} = \{H_{n_q}\}_{q \rightarrow
      \infty}$ be a sequence of hypergraphs such that $|V(H_{n_q})| = n_q$, $n_{q} < n_{q+1}$, and
      $|E(H_{n_q})| \geq p \binom{n_q}{k} + o(n_q^k)$, where now the little-$o$ expression means
      there exists a function $f(q)$ such that $|E(H_{n_q})| \geq p \binom{n_q}{k} + f(q)$ with
      $\lim_{q \rightarrow\infty} f(q) n_q^{-k} = 0$.  Similarly, when we say that property $P$
      (which might include a little-$o$ expression) implies a property $P'$, what we mean is that
      there exist functions $f(q)$ and $f'(q)$ such that $P(f(q))$ implies $P'(f'(q))$, where the
      notation $P(f(q))$ stands for the property $P$ with the little-$o$ replaced by the function
      $f(q)$.  

    \item If $\pi = 1+ \dots + 1$, the partition of $k$ into $k$ ones, then the equivalences
      \propexpand{$\pi$} $\Leftrightarrow$ \propcount{$\pi$-linear} $\Leftrightarrow$
      \propcycle{$\pi$} of Theorem~\ref{thm:mainequiv} constitute
      Theorem~\ref{thm:conlonhyperquasibottom}.  Therefore, the property \propeig{$1+\cdots+1$} is
      the spectral property that is equivalent to the weak quasirandom properties studied by
      Kohayakawa, R\"odl, and Skokan~\cite{hqsi-kohayakawa02} and Conlon, H\`{a}n, Person, and
      Schacht~\cite{hqsi-conlon12}.

    \item If $\pi'$ is a refinement of $\pi$, then clearly \propcount{$\pi$-linear} $\Rightarrow$
      \propcount{$\pi'$-linear} and so if $\{H_n\}_{n\rightarrow\infty}$ is a sequence satisfying
      the properties in Theorem~\ref{thm:mainequiv} for $\pi$, it satisfies the properties for
      $\pi'$.  In a companion paper~\cite{hqsi-lenz-poset12}, we show the converse: if $\pi'$ is not
      a refinement of $\pi$ then \propexpand{$\pi$} $\not\Rightarrow$ \propexpand{$\pi'$} so the
      property \propexpand{$\pi$} is distinct for distinct $\pi$ and arranged in a poset via
      partition refinement.

  \end{itemize}
\end{remarks}

The remainder of this paper is organized as follows.  In Section~\ref{sub:cycledefinion}, we define
the hypergraph cycles $C_{\pi,4}$.  Section~\ref{sec:hypergrapheigenvalues} gives the formal
definition of eigenvalues with respect to $\pi$.  Theorem~\ref{thm:mainequiv} is proved by showing a
chain of implications in the order stated in the theorem; Section~\ref{sec:eigen2expander} proves
\propeig{$\pi$} $\Rightarrow$ \propexpand{$\pi$}, Section~\ref{sec:expander2c4} proves
\propexpand{$\pi$} $\Rightarrow$ \propcount{$\pi$-linear}, and Section~\ref{sec:c4toeigenvalue}
shows that \propcycle[4\ell]{$\pi$} $\Rightarrow$ \propeig{$\pi$} for $d$-coregular hypergraphs with
loops. Throughout this paper, we use the notation $[n] = \{1,\dots,n\}$.

\section{Hypergraph Cycles}
\label{sub:cycledefinion}

In this section, we define the hypergraph cycles $C_{\pi,2\ell}$.
The hypergraph cycles $C_{\pi,2\ell}$ are defined by first defining steps, then defining a path as a
combination of steps, and finally defining the cycle as a path with its endpoints identified.

\begin{definition}
  Let $\vec{\pi} = (1,\dots,1)$ be the ordered partition of $t$ into $t$ parts. Define the
  \emph{step of type $\vec{\pi}$}, denoted $S_{\vec{\pi}}$, as follows.  Let $A$ be a vertex set of
  size $2^{t-1}$ where elements are labeled by binary strings of length $t-1$ and let
  $B_{2},\dots,B_t$ be disjoint sets of size $2^{t-2}$ where elements are labeled by binary strings
  of length $t-2$.  The vertex set of $S_{\vec{\pi}}$ is the disjoint union $A \dot\cup B_{2}
  \dot\cup \cdots \dot\cup B_t$. Make $\{a,b_{2},\dots,b_t\}$ a hyperedge of $S_{\vec{\pi}}$ if $a
  \in A$, $b_j \in B_j$, and the code for $b_{j+1}$ is equal to the code formed by removing the
  $j$th bit of the code for $a$.

  For a general $\vec{\pi} = (k_1,\dots,k_t)$, start with $S_{(1,\dots,1)}$ and enlarge each vertex
  into the appropriate size; that is, a vertex in $A$ is expanded into $k_1$ vertices and each
  vertex in $B_j$ is expanded into $k_j$ vertices.  More precisely, the vertex set of
  $S_{\vec{\pi}}$ is $(A \times [k_1]) \dot\cup (B_2 \times [k_2]) \dot\cup \cdots \dot\cup (B_t
  \times [k_t])$, and if $\{a,b_2,\dots,b_t\}$ is an edge of $S_{(1,\dots,1)}$, then
  $\{(a,1),\dots,(a,k_1),(b_2,1),\dots, \linebreak[1] (b_2,k_2),\dots, \linebreak[1]
  (b_t,1),\dots,(b_t,k_t)\}$ is a hyperedge of $S_{\vec{\pi}}$.

  This defines the \emph{step of type $\vec{\pi}$}, denoted $S_{\vec{\pi}}$.  Let $A^{(0)}$ be the
  ordered tuple of vertices of $A$ in $S_{\vec{\pi}}$ whose binary code ends with zero and $A^{(1)}$
  the ordered tuple of vertices of $A$ whose binary code ends with one, where vertices are listed in
  lexicographic order within each $A^{(i)}$.  These tuples $A^{(0)}$ and $A^{(1)}$ are the two
  \emph{attach tuples of $S_{\vec{\pi}}$}
\end{definition}

\begin{figure}[ht] 
\begin{center}
\subfloat[$S_{(1,1)}$]{%
\begin{tikzpicture}
  \node (x1) at (0,0) [vertex,label=below:0] {};
  \node (x2) at (0,1) [vertex,label=above:1] {};
  \node (z)  at (1.5,0.5) [vertex,label=below:$\emptyset$] {};

  \node (A) at (0,-1.3) {$A$};
  \node (B) at (1.5,-1.3) {$B_2$};

  \draw (x1) -- (z) -- (x2);
\end{tikzpicture}
} \quad \quad \quad \quad
\subfloat[$S_{(3,2)}$]{%
\begin{tikzpicture}
  \node (x1) at (0,0) [vertex] {};
  \node (x2) at (-0.3,-0.3) [vertex] {};
  \node (x3) at (0.3,-0.3) [vertex] {};

  \node (x4) at (0,1.5) [vertex] {};
  \node (x5) at (-0.3,1.8) [vertex] {};
  \node (x6) at (0.3,1.8) [vertex] {};

  \node (z1) at (2,.7) [vertex] {};
  \node (z2) at (2.4,.7) [vertex] {};

  \node (A) at (0,-1.3) {$A$};
  \node (B) at (2.2,-1.3) {$B_2$};

  \draw (1,1.2) circle[x radius=0.6cm, y radius=1.8cm, rotate=68];
  \draw (1,.25) circle[x radius=0.6cm, y radius=1.8cm, rotate=-68];
\end{tikzpicture}
}
\caption{Steps with $t = 2$}
\label{fig:stepswithtwoparts}
\end{center}
\end{figure} 

Figure~\ref{fig:stepswithtwoparts} shows the steps of type $(1,1)$ and type $(3,2)$.
Notice that each step has ``length'' two if we consider the attach tuples as the ``ends'' of a path.

\begin{figure}[ht] 
\begin{center}
\subfloat[$S_{(1,1,1)}$]{%
\begin{tikzpicture}
  \node (x4) at (0,2) [vertex,label=left:11] {};
  \node (x3) at (0,1) [vertex,label=left:01] {};
  \node (x2) at (0,0) [vertex,label=left:10] {};
  \node (x1) at (0,-1) [vertex,label=left:00] {};

  \node (y1) at (1.3,0) [vertex,label=below:0] {};
  \node (y2) at (1.3,1) [vertex,label=above:1] {};
  \node (z1) at (2.6,0) [vertex,label=below:$0$] {};
  \node (z2) at (2.6,1) [vertex,label=above:$1$] {};

  \node (A1) at (0,-2.2) {$A$};
  \node (B2) at (1.3,-2.2) {$B_2$};
  \node (B3) at (2.6,-2.2) {$B_3$};

  \draw[rounded corners=12pt] (x1) -- (1.3,0) -- (z1);
  \draw[rounded corners=12pt] (x3) -- (1.3,1) -- (z1);
  \draw[rounded corners=12pt] (x2) -- (1.3,0) -- (z2);
  \draw[rounded corners=12pt] (x4) -- (1.3,1) -- (z2);
\end{tikzpicture}
} \quad \quad \quad
\subfloat[$S_{(1,1,1)}$]{%
\begin{tikzpicture}
  \node (x1) at (0,0) [vertex,label=left:00] {};
  \node (x2) at (0,1) [vertex,label=left:10] {};
  \node (y1) at (1.3,0.5) [vertex,label=below:0] {};
  \node (z1) at (2.6,0) [vertex,label=below:0] {};
  \node (z2) at (2.6,1) [vertex,label=above:1] {};
  \node (y2) at (3.9,0.5) [vertex,label=below:1] {};
  \node (x3) at (5.2,0) [vertex,label=right:01] {};
  \node (x4) at (5.2,1) [vertex,label=right:11] {};

  \node (A1) at (0,-1.3) {$A^{(0)}$};
  \node (B2) at (1.3,-1.3) {$B_2$};
  \node (B3) at (2.6,-1.3) {$B_3$};
  \node (B22) at (3.9,-1.3) {$B_2$};
  \node (A12) at (5.2,-1.3) {$A^{(1)}$};

  \draw[rounded corners=12pt] (x1) -- (1.3,0.5) -- (z1);
  \draw[rounded corners=12pt] (x2) -- (1.3,0.5) -- (z2);
  \draw[rounded corners=12pt] (x3) -- (3.9,0.5) -- (z1);
  \draw[rounded corners=12pt] (x4) -- (3.9,0.5) -- (z2);
\end{tikzpicture}
}
\caption{Steps of type $\pi = (1,1,1)$}
\label{fig:partialsteps}
\end{center}
\end{figure} 

Figure~\ref{fig:partialsteps} shows two different drawings of the step of type $\vec{\pi} = (1,1,1)$.
Notice that the attach tuples are easily visible in Figure~\ref{fig:partialsteps} \textit{(b)},
since the two attach tuples are the codes in $A$ ending with a zero and a one.  The step of
type $\vec{\pi} = (k_1,k_2,k_3)$ is an enlarged version of Figure~\ref{fig:partialsteps} similar to
Figure~\ref{fig:stepswithtwoparts} \textit{(b)}.

In general for arbitrary $\vec{\pi}$, the step $S_{\vec{\pi}}$ can be drawn in two ways similar to
Figure~\ref{fig:partialsteps}.  First from the definition, a step is a $k$-partite hypergraph with
parts $A,B_2,\dots,B_t$ so it can be drawn similar to Figure~\ref{fig:partialsteps} \textit{(a)}.
But the step can also be drawn with the two attach tuples on separate ends of the picture like
Figure~\ref{fig:partialsteps} \textit{(b)}.  Let $M_0$ be the set of edges incident to vertices in
the attach tuple $A^{(0)}$ and $M_1$ the set of edges incident to vertices in $A^{(1)}$.  Edges from
$M_0$ and $M_1$ intersect only in vertices in $B_t$ because if $a_0 \in A^{(0)}$ and $a_1 \in
A^{(1)}$ then the code for $a_0$ ends in a zero and the code for $a_1$ ends in a one, so only when
deleting the last bit will the codes possibly be the same.  Therefore, the step $S_{\vec{\pi}}$ can
be viewed as a type of length two path in a hypergraph formed from a collection of $k$-partite edges
$M_0$ between $A^{(0)}$ and $B_t$ and another collection of $k$-partite edges $M_1$ between $B_t$
and $A^{(1)}$.

\begin{definition}
  Let $\ell \geq 1$.  The \emph{path of type $\vec{\pi}$ of length $2\ell$}, denoted
  $P_{\vec{\pi},2\ell}$, is the hypergraph formed from $\ell$ copies of $S_{\vec{\pi}}$ with
  successive attach tuples identified.  That is, let $T_1,\dots,T_{\ell}$ be copies of
  $S_{\vec{\pi}}$ and let $A^{(0)}_{i}$ and $A^{(1)}_{i}$ be the attach tuples of $T_i$.  The
  hypergraph $P_{\vec{\pi},2\ell}$ is the hypergraph consisting of $T_1,\dots,T_{\ell}$ where the
  vertices of $A^{(1)}_{i}$ are identified with $A^{(0)}_{i+1}$ for every $1 \leq i \leq \ell - 1$.
  (Recall that by definition, $A^{(1)}_i$ and $A^{(0)}_{i+1}$ are tuples (i.e.\ ordered lists) of
  vertices, so the identification of $A^{(1)}_{i}$ and $A^{(0)}_{i+1}$ identifies the corresponding
  vertices in these tuples.) The \emph{attach tuples of $P_{\vec{\pi},2\ell}$} are the tuples
  $A^{(0)}_{1}$ and $A^{(1)}_{\ell}$.
\end{definition}

\begin{figure}[ht] 
\begin{center}
\begin{tikzpicture}[pathscale]
  \node (x1) at (0,0) [vertdia] {};
  \node (x2) at (0,1) [vertdia] {};
  \node (y1) at (1.3,0.5) [vertex] {};
  \node (z1) at (2.6,0) [vertrect] {};
  \node (z2) at (2.6,1) [vertrect] {};
  \node (y2) at (3.9,0.5) [vertex] {};
  \node (x3) at (5.2,0) [vertdia] {};
  \node (x4) at (5.2,1) [vertdia] {};

  \draw[rounded corners=12pt] (x1) -- (1.3,0.5) -- (z1);
  \draw[rounded corners=12pt] (x2) -- (1.3,0.5) -- (z2);
  \draw[rounded corners=12pt] (x3) -- (3.9,0.5) -- (z1);
  \draw[rounded corners=12pt] (x4) -- (3.9,0.5) -- (z2);

  \begin{scope}[xshift=5.2cm]
    \node (x1) at (0,0) [vertdia] {};
    \node (x2) at (0,1) [vertdia] {};
    \node (y1) at (1.3,0.5) [vertex] {};
    \node (z1) at (2.6,0) [vertrect] {};
    \node (z2) at (2.6,1) [vertrect] {};
    \node (y2) at (3.9,0.5) [vertex] {};
    \node (x3) at (5.2,0) [vertdia] {};
    \node (x4) at (5.2,1) [vertdia] {};

    \draw[rounded corners=12pt] (x1) -- (1.3,0.5) -- (z1);
    \draw[rounded corners=12pt] (x2) -- (1.3,0.5) -- (z2);
    \draw[rounded corners=12pt] (x3) -- (3.9,0.5) -- (z1);
    \draw[rounded corners=12pt] (x4) -- (3.9,0.5) -- (z2);
  \end{scope}
\end{tikzpicture}
\caption{$P_{(1,1,1),4}$}
\label{fig:paththree}
\end{center}
\end{figure} 

In Figure~\ref{fig:paththree}, the path $P_{(1,1,1),4}$ is drawn as two copies of $S_{(1,1,1)}$ with
attach tuples identified.  The diamond, circle, and square vertices keep track of the parts
$A,B_2,B_3$.  For a general $P_{(k_1,k_2,k_3),4}$, each diamond vertex is enlarged into $k_1$
vertices, each circle vertex is enlarged into $k_2$ vertices, and each square vertex is enlarged
into $k_3$ vertices.  For a general $\vec{\pi}$, every step can be visualised as in
Figure~\ref{fig:partialsteps} \textit{(b)} as two collections of $k$-partite edges $M_0$ and $M_1$
between $A$ and $B_t$, so all paths $P_{\vec{\pi},2\ell}$ can be visualised as in
Figure~\ref{fig:paththree} as a concatenation of steps.

\begin{onlyarxiv}
\begin{definition}
  Let $\ell \geq 2$.  The \emph{cycle of type $\vec{\pi}$ and length $2\ell$}, denoted
  $C_{\vec{\pi},2\ell}$, is the hypergraph formed from $P_{\vec{\pi},2\ell}$ by identifying the
  attach tuples of $P_{\vec{\pi},2\ell}$.
\end{definition}

\begin{lemma} \label{lem:cycleindpendent}
  If $\pi$ is an (unordered) proper partition of $k$ and $\vec{\pi}$ and $\vec{\pi}'$ are two
  orderings of $\pi$, then $C_{\vec{\pi},2\ell} \cong C_{\vec{\pi}',2\ell}$.
\end{lemma}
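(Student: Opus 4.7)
The plan is to reduce everything to a symmetry of the unweighted cycle. Observe that $C_{\vec\pi,2\ell}$ is obtained from $C_{(1,\dots,1),2\ell}$ by blowing up each vertex of the $s$-th part by a factor $k_s$. Hence any automorphism of $C_{(1,\dots,1),2\ell}$ that permutes the $t$ parts according to a permutation $\sigma$ extends to an isomorphism $C_{\vec\pi,2\ell}\to C_{\sigma(\vec\pi),2\ell}$. It therefore suffices to show that the symmetric group $S_t$ acts on $C_{(1,\dots,1),2\ell}$ by part-permuting automorphisms, and by generators it is enough to realize each adjacent transposition $(s,s+1)$ of parts.

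Transpositions within $\{B_2,\dots,B_t\}$ come from permuting the coordinate directions of the $(t-1)$-cube on which the step is built: bit position $j$ corresponds to $B_{j+1}$. When the swap does not involve bit $t-1$ (the attach bit), the cube-coordinate swap is an automorphism of each step that permutes the relevant $B$-parts, and applying it to every step simultaneously is an automorphism of the cycle. To swap $B_t$ with some other $B_j$, I would note that the cycle construction is intrinsically independent of which cube direction is chosen as the attach direction: carrying out the construction with bit $j-1$ as attach bit and then applying a global bit-swap $j-1 \leftrightarrow t-1$ produces the same hypergraph, and this gives the desired part-swap. Combining these operations realizes every permutation of $\{B_2,\dots,B_t\}$.

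The main obstacle is the transposition swapping $A$ with $B_t$ (equivalently, after composing with the previous case, swapping $A$ with any $B_j$). This cannot be seen at the step level, since $A$-vertices have degree one in a single step while $B_j$-vertices have degree two; but in the cycle every vertex has degree two and the distinction disappears. The approach I would take is \emph{re-parsing}. Each $B_t$-vertex of a step is incident to exactly two hyperedges lying on opposite sides of the attach partition of that step, so $B_t$-vertices already behave as "internal" attach tuples joining the two halves of one step, in direct analogy to how $A$-vertices join consecutive steps. I would redraw the step boundaries of the cycle so that they pass through $B_t$-vertices rather than through the original $A$-vertices, repartitioning the $\ell 2^{t-1}$ edges into $\ell$ new pieces whose attach tuples live in $B_t$. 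Verifying that each new piece is isomorphic to $S_{(1,\dots,1)}$ with $B_t$ in the role of $A$, and that the new pieces are glued into a cycle of length $2\ell$ along these new attach tuples, produces two different step-labelings of the same hypergraph and hence an isomorphism between them that exchanges $A$ and $B_t$.

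The hardest step will be carrying out the re-parsing concretely and checking that the $\ell$ new pieces really satisfy the step-cycle definition. A clean way is to parametrize vertices of all $t$ parts uniformly by pairs $(p,c)\in\mathbb{Z}/\ell\times\{0,1\}^{t-2}$ and edges by pairs $(i,d)\in\mathbb{Z}/\ell\times\{0,1\}^{t-1}$, write down the edge-incidence relations in a form that is symmetric in the parts up to a cyclic shift determined by $d$, and then read off the bit-level bijection that swaps the "cycle-position" coordinate with one of the cube-direction coordinates. This bijection is the required part-swapping automorphism.
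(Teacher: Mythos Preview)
Your proposal is essentially the same approach as the paper's. Both arguments (i) handle permutations of the $B$-parts via bit-coordinate permutations on the $(t-1)$-cube at the step level, and (ii) handle the exchange of $A$ with $B_t$ at the cycle level by the ``re-parsing'' idea---shifting the step boundaries so they pass through $B_t$ rather than $A$ and checking that the new pieces are again steps. The paper carries out (ii) explicitly by taking $M_{i,1}\cup M_{i+1,0}$ as the new step and verifying the bit-code conditions, which is exactly the verification you defer to your last paragraph. Your preliminary reduction to the $(1,\dots,1)$ case via blow-up is a clean touch the paper does not isolate, and your caution in treating $B_j\leftrightarrow B_t$ separately (because the attach bit moves) is more careful than the paper, which simply asserts the step isomorphism for all $\eta$ fixing position $1$; but these are presentational differences, not a different proof.
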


\begin{proof} 
Let $\vec{\pi} = (k_1,\dots,k_t)$ and let $\eta : \{2,\dots,t\} \rightarrow \{2,\dots,t\}$ be any
bijection of the numbers $2,\dots,t$.  We first claim that $S_{\vec{\pi}}\cong
S_{(k_1,k_{\eta(2)},\dots,k_{\eta(t)})}$.  This follows directly from the definition of the step;
the bit strings can be permuted using $\eta$.  That is, the isomorphism between $S_{\vec{\pi}}$ and
$S_{(k_1,k_{\eta(2)},\dots,k_{\eta(t)})}$ is the isomorphism which takes a vertex in $A$ in
$V(S_{\vec{\pi}})$ with binary code $a_2\dots a_{t}$ to the vertex with code $a_{\eta^{-1}(2)}\dots
a_{\eta^{-1}(t)}$ in $A$ in $S_{(k_1,k_{\eta(2)},\dots,k_{\eta(t)})}$ and which also takes a vertex
in $B_{j}$ in $S_{\vec{\pi}}$ with code $b_2\dots b_{j-1} b_{j+1}\dots b_t$ to the vertex in
$B_{\eta(j)}$ in $S_{(k_1,k_{\eta(2)},\dots,k_{\eta(t)})}$ with code
$b_{\eta^{-1}(2)}b_{\eta^{-1}(3)}\dots b_{\eta^{-1}(\eta(j)-1)} b_{\eta^{-1}(\eta(j)+1)}
\linebreak[1]\dots b_{\eta^{-1}(t)}$ if $2 < \eta(j) < t$, to the vertex with code
$b_{\eta^{-1}(3)} \dots b_{\eta^{-1}(t)}$ if $\eta(j) = 2$, and to the vertex with code
$b_{\eta^{-1}(2)} \dots b_{\eta^{-1}(t-1)}$ if $\eta(j) = t$.
To see that this bijection preserves hyperedges, let $\{\vec{a},\vec{b}_2,\dots,\vec{b_t}\}$ be a
set of vertices in $S_{\vec{\pi}}$ where $\vec{a} \in A$ and $\vec{b}_j \in B_j$.  For every $j$,
we have $a_2 \dots a_{j-1} a_{j+1}\dots a_t = b_2 \dots b_{j-1} b_{j+1} \dots b_t$ if and only if
$a_{\eta^{-1}(2)} a_{\eta^{-1}(3)} \dots a_{\eta^{-1}(\eta(j)-1)} a_{\eta^{-1}(\eta(j)+1)} \dots
a_{\eta^{-1}(t)} = b_{\eta^{-1}(2)} b_{\eta^{-1}(3)}\dots b_{\eta^{-1}(\eta(j)-1)} \linebreak[1]
b_{\eta^{-1}(\eta(j)+1)} \dots \linebreak[1] b_{\eta^{-1}(t)}$ if $2 < \eta(j) < t$ (similar
statements hold for $\eta(j) = 2$ and $\eta(j) = t$).  This implies that
$\{\vec{a},\vec{b}_2,\dots,\vec{b}_t\}$ is an edge of $S_{\vec{\pi}}$ if and only if the image is an
edge of $S_{(k_1,k_{\eta(2)},\dots,k_{\eta(t)})}$.

By the previous paragraph, it suffices to prove that $C_{\vec{\pi},2\ell} \cong
C_{(k_t,k_2,\dots,k_{t-1},k_1),2\ell}$ to complete the proof of the lemma.  Indeed, if $\vec{\pi}' =
(k_{f(1)}, \dots, k_{f(t)})$ with $f(1) > 1$, then the transformations $(1,\dots,t) \rightarrow
(1,2,\dots,f(1)-1,f(1)+1,\dots,t,f(1)) \rightarrow (f(1),2,\dots,f(1)-1,f(1)+1,\dots,t,1)
\rightarrow (f(1),\dots,f(t))$ show that $C_{\vec{\pi},2\ell} \cong C_{\vec{\pi}',2\ell}$.

The fact that $C_{\vec{\pi},2\ell} \cong C_{(k_t,k_2,\dots,k_{t-1},k_1),2\ell}$ is easy to see in
Figure~\ref{fig:paththree}.  In Figure~\ref{fig:paththree}, consider swapping the diamond and square
vertices.  This changes the path, but the cycle is Figure~\ref{fig:paththree} with the diamond
vertices on the ends identified, so swapping the diamond and square vertices preserves the cycle.
In general, as discussed before, the step $S_{\vec{\pi}}$ can be drawn similar to
Figure~\ref{fig:partialsteps} \textit{(b)} as a collection $M_0$ of $k$-partite edges between
$A^{(0)}$ and $B_t$ and a collection $M_1$ of $k$-partite edges between $A^{(1)}$ and $B_t$ and so
the path $P_{\vec{\pi},2\ell}$ can be visualized like Figure~\ref{fig:paththree}.  Therefore the
cycle $C_{\vec{\pi},2\ell}$ consists of a list of collections of $k$-partite edges;
$E(C_{\vec{\pi},2\ell})$ is $M_{1,0} \dot\cup M_{1,1} \dot\cup M_{2,0} \dot\cup M_{2,1} \dot\cup
\cdots \dot\cup M_{\ell,0} \dot\cup M_{\ell,1}$ where $M_{i,0} \cup M_{i,1}$ is a copy of
$S_{\vec{\pi}}$.  But $M_{i,1} \cup M_{i+1,0}$ (modulo $\ell$) forms a copy of
$S_{(k_t,k_2,\dots,k_{t-1},k_1)}$ for all $i$ as follows.  Since bit strings in the $A$-part of
$M_{i,1}$ have last bit one by definition, drop the last bit.  After dropping these bits, for each
edge $E$ in $M_{i,1}$, the vertices in $E \cap A$ and $E \cap B_t$ have the same code.  Also, for $2
\leq j \leq t-1$ the code for the vertices $E \cap B_j$ is formed by adding a one to the bit string
for $E \cap A$ and then deleting the $(j-1)$th entry.  But since $E \cap A$ and $E \cap B_t$ have
the same code, this is the same as adding a one to the bit string for $E \cap B_t$ and then deleting
the $(j-1)$th entry.  Therefore we could add a one to the end of all the codes in $B_t$ and now have
half of the edges which make up $S_{(k_t,k_2,\dots,k_{t-1},k_1)}$.  A similar argument shows that
$M_{i+1,0}$ forms the other half and so $M_{i,1} \cup M_{i+1,0}$ is a copy of
$S_{(k_t,k_2,\dots,k_{t-1},k_1)}$.  Thus $C_{\vec{\pi},2\ell}$ is built out of $\ell$ copies of
$S_{(k_t,k_2,\dots,k_{t-1},k_1)}$.
\end{proof} 

\begin{definition}
  Let $\pi$ be an (unordered) proper partition of $k$.  The \emph{cycle of type $\pi$ and length
  $2\ell$}, denoted $C_{\pi,2\ell}$, is $C_{\vec{\pi},2\ell}$ where $\vec{\pi}$ is any ordering of
  $\pi$.
\end{definition}
\end{onlyarxiv}

\begin{notarxiv}
  \begin{definition}
    Let $\ell \geq 2$.  The \emph{cycle of type $\pi$ and length $2\ell$}, denoted $C_{\pi,2\ell}$,
    is the hypergraph formed by picking any ordering $\vec{\pi}$ of $\pi$ and identifying the attach
    tuples of $P_{\vec{\pi},2\ell}$.
  \end{definition}

  The definition of $C_{\pi,2\ell}$ is independent of the ordering $\vec{\pi}$; a proof appears
  in~\cite{hqsi-lenz-quasi12-arxiv}.
\end{notarxiv}

\begin{definition}
  Let $\ell \geq 2$.  A \emph{walk of type $\vec{\pi}$ and length $2\ell$ in a hypergraph $H$} is a
  function $f : V(P_{\vec{\pi},2\ell}) \rightarrow V(H)$ that preserves edges.  Informally, a walk
  is a path where the vertices are not necessarily distinct.  A \emph{circuit of type $\pi$ of
  length $2\ell$ in a hypergraph $H$} is a function $f : V(C_{\pi,2\ell}) \rightarrow V(H)$ that
  preserves edges.  Informally, a circuit is a cycle where the vertices are not necessarily
  distinct.
\end{definition}

There are two alternative definitions of the cycle of length four.  First, Conlon et
al.~\cite{hqsi-conlon12} defined a cycle of length four for $\pi = 1 + \dots + 1$ by an operation
called \emph{reflection}.  Our definition of $C_{1 + \dots + 1,4}$ is equivalent to the definition in
\cite{hqsi-conlon12}; this can be seen by noticing that the bit strings in our definition keep track
of the vertex duplications which occur during reflection.

Finally, there is a concise direct definition of the cycle of type $\pi$ and length four which
avoids the complexity of defining steps and paths.  We will not use this shorter definition in this
paper, instead working with steps, paths, and walks, but we include this short definition for
completeness.  Let $D_1,\dots,D_t$ be disjoint sets of size $2^{t-1}$ whose elements are labeled by
$(t-1)$-length binary strings.  The vertex set is $D_1 \dot\cup \dots \dot\cup D_t$.  For $d_1 \in
D_1, \dots, d_t \in D_t$, make $\{d_1,\dots,d_t\}$ a hyperedge if there exists a binary string $s$
of length $t$ such that the code for $d_i$ equals the code formed by deleting the $i$th bit of $s$.
The cycle for general $\pi$ is formed by enlarging this cycle appropriately.
Figure~\ref{fig:altcycledef} shows cycles drawn using this definition.

\begin{figure}[ht] 
\begin{center}
\subfloat[$C_{(1,1),4}$]{%
\begin{tikzpicture}
  \node (v1) at (0,0) [vertex,label=below:0] {};
  \node (v2) at (0,2) [vertex,label=above:0] {};
  \node (v3) at (2,0) [vertex,label=below:1] {};
  \node (v4) at (2,2) [vertex,label=above:1] {};

  \draw (v1) -- node[above,sloped] {00} (v2);
  \draw (v1) -- node[near start,above,sloped] {01} (v4);
  \draw (v2) -- node[near end,above,sloped] {10} (v3);
  \draw (v3) -- node[below,sloped] {11} (v4);
\end{tikzpicture}
}\quad \quad \quad
\subfloat[$C_{(1,1,1),4}$]{%
\begin{tikzpicture}[scale=0.7]
  \tikzstyle{foot}=[font=\footnotesize]

  \node (v00) at (0,0) [vertex,label={[foot]left:00}] {};
  \node (v10) at (3,3) [vertex,label={[foot]above:10}] {};
  \node (v11) at (6,0) [vertex,label={[foot]right:11}] {};
  \node (v01) at (3,-3) [vertex,label={[foot]below:01}] {};
 
  \draw (0,-3) +(45:0.5) node (s00) [vertrect,label={[foot]north east:00}] {};
  \draw (0,-3) +(225:0.5) node (s10) [vertrect,label={[foot]south west:10}] {};
  \draw (6,3) +(45:0.5) node (s11) [vertrect,label={[foot]north east:11}] {};
  \draw (6,3) +(225:0.5) node (s01) [vertrect,label={[foot]south west:01}] {};

  \draw (0,3) +(135:0.5) node (d10) [vertdia,label={[foot]north west:10}] {};
  \draw (0,3) +(-45:0.5) node (d00) [vertdia,label={[foot]south east:00}] {};
  \draw (6,-3) +(135:0.5) node (d01) [vertdia,label={[foot]north west:01}] {};
  \draw (6,-3) +(-45:0.5) node (d11) [vertdia,label={[foot]south east:11}] {};

  \draw (d10).. controls (v00).. node[left,very near end,foot] {100} (s10);
  \draw (s00).. controls (v00).. node[right,near end,foot] {000} (d00);
  \draw (s10).. controls (v01).. node[below,very near start,foot] {101} (d11);
  \draw (s00).. controls (v01).. node[above,near end,foot] {001} (d01);
  \draw (d01).. controls (v11).. node[left,near start,foot] {011} (s01);
  \draw (d11).. controls (v11).. node[right,very near end,foot] {111} (s11);
  \draw (d00).. controls (v10).. node[below,near end,foot] {010}  (s01);
  \draw (d10).. controls (v10).. node[above,very near start,foot] {110} (s11);
\end{tikzpicture}
}
\caption{Alternate definition of the cycle of length four}
\label{fig:altcycledef}
\end{center}
\end{figure} 

\section{Hypergraph Eigenvalues}
  \label{sec:hypergrapheigenvalues}

This section contains the definition of the largest and second largest eigenvalues of a hypergraph with
respect to $\pi$ and also contains some discussion and basic facts about them.

There have been three independently developed approaches to hypergraph eigenvalues: a definition by
Chung~\cite{ee-chung93} and Lu and Peng~\cite{ee-lu11,ee-lu11-2} using matrices, an approach of
Friedman and Wigderson~\cite{ee-friedman95,ee-friedman95-2} and Cooper and Dutle~\cite{ee-cooper11},
and lastly the eigenvalues of the shadow graph~\cite{ee-bilu04, ee-feng96, ee-martinez00,
ee-martinez01, ee-storm06}.  The definitions of Friedman and
Wigderson~\cite{ee-friedman95,ee-friedman95-2} are most suitable for our purposes and we will use
their definitions as our starting point.

\begin{definition}
  Let $V_1,\dots,V_k$ be finite-dimensional vector spaces over $\mathbb{R}$.  A \emph{$k$-linear
  map} is a function $\phi : V_1 \times \dots \times V_k \rightarrow \mathbb{R}$ such that for
  each $1 \leq i \leq k$, $\phi$ is linear in the $i$th coordinate.  That is, for every fixed $x_i
  \in V_i$, $\phi(x_1,\dots,x_{i-1},\cdot,x_{i+1},\dots,x_{n})$ is a linear map from $V_i$ to
  $\mathbb{R}$.  A $k$-linear map $\phi : V^k \rightarrow \mathbb{R}$ is \emph{symmetric} if for
  all permutations $\eta$ of $[k]$ and all $x_1,\dots,x_k \in V$, $\phi(x_1,\dots,x_k) =
  \phi(x_{\eta(1)}, \dots, x_{\eta(k)})$.
\end{definition}

\begin{definition}
  Let $V_1, \dots, V_k$ be finite-dimensional vector spaces over $\mathbb{R}$, let $B_i =
  \{b_{i,1},\dots,\linebreak[1] b_{i,\dim(V_i)}\}$ be an orthonormal basis of $V_i$, and let $\phi :
  B_1 \times \dots \times B_k \rightarrow \mathbb{R}$ be any map.  \emph{Extending $\phi$ linearly
  to $V_1 \times \dots \times V_k$} means that $\phi$ is extended to a map $V_1 \times \dots \times
  V_k \rightarrow \mathbb{R}$ where for $x_1 \in V_1, \dots, x_k \in V_k$,
  \begin{align} \label{eq:extendlinearly}
    \phi(x_1,\dots,x_k) = \sum_{j_1 = 1}^{\dim(V_1)} \cdots \sum_{j_k = 1}^{\dim(V_k)}
    \dotp{x_1}{b_{1,j_1}} \cdots \dotp{x_k}{b_{k,j_k}} \phi(b_{1,j_1},\dots,b_{k,j_k}).
  \end{align}
  Note that extending $\phi$ in this way produces a $k$-linear map.
\end{definition}

\begin{definition} \textbf{(Friedman and
  Wigderson~\cite{ee-friedman95,ee-friedman95-2})} Let $H$ be a $k$-uniform hypergraph with loops.
  The \emph{adjacency map of $H$} is the symmetric $k$-linear map $\tau_H : W^k \rightarrow
  \mathbb{R}$ defined as follows, where $W$ is the vector space over $\mathbb{R}$ of dimension
  $|V(H)|$.  First, for all $v_1, \dots, v_k \in V(H)$, let
  \begin{align*}
    \tau_H(e_{v_1}, \dots, e_{v_k}) = 
    \begin{cases}
      1 & \left\{ v_1, \ldots, v_k \right\} \in E(H), \\
      0 & \text{otherwise},
    \end{cases}
  \end{align*}
  where $e_v$ denotes the indicator vector of the vertex $v$, that is the vector which has a one in
  coordinate $v$ and zero in all other coordinates.  We have defined the value of $\tau_H$ when the
  inputs are standard basis vectors of $W$.  Extend $\tau_H$ to all the domain linearly.
\end{definition}

\begin{definition}
  Let $W_1,\dots,W_k$ be finite dimensional vector spaces over $\mathbb{R}$, let $\left\lVert \cdot
  \right\rVert$ denote the Euclidean $2$-norm on $W_i$, and
  let $\phi : W_1 \times \dots \times W_k \rightarrow \mathbb{R}$ be a $k$-linear map.  The
  \emph{spectral norm of $\phi$} is
  \begin{align*}
    \left\lVert \phi \right\rVert 
      = \sup_{\substack{x_i \in W_i \\ \left\lVert x_i \right\rVert = 1}} \left|
          \phi(x_1,\dots,x_k) \right|.
  \end{align*}
\end{definition}

Before defining the first and second largest eigenvalue of $H$ with respect to a general partition
$\pi$, we give the definitions when $\pi = 1 + \dots + 1$, that is $\pi$ is the partition into $k$
ones.

\begin{definition}
  Let $H$ be an $n$-vertex, $k$-uniform hypergraph, let $W$ be the vector space over $\mathbb{R}$ of
  dimension $n$, and let $J : W^k \rightarrow \mathbb{R}$ be the all-ones map.  That is, if
  $e_{i_1},\dots,e_{i_k}$ are any standard basis vectors of $W$, then $J(e_{i_1},\dots,e_{i_k}) =
  1$, and $J$ is extended linearly to all of the domain as in \eqref{eq:extendlinearly}.
  
  The \emph{largest eigenvalue of $H$ with respect to $\pi = 1 + \dots + 1$} is $\left\lVert \tau_H
  \right\rVert$ and the \emph{second largest eigenvalue of $H$ with respect to $\pi = 1 + \dots + 1$}
  is $\left\lVert \tau_H - \frac{k!|E(H)|}{n^k} J \right\rVert$.
\end{definition}

In order to extend this definition to general $\vec{\pi} = (k_1,\dots,k_t)$, it is convenient to use
the language of tensor products.

\begin{definition}
  Let $V$ and $W$ be finite dimensional vector spaces over $\mathbb{R}$ of dimension $n$ and $m$
  respectively.  The \emph{tensor product of $V$ and $W$}, written $V \otimes W$, is the vector
  space over $\mathbb{R}$ of dimension $nm$.  A typical tensor $a$ in $V \otimes W$ has the form $a
  = \sum_{i=1}^{\dim(V)} \sum_{j=1}^{\dim(W)} \alpha_{i,j} (e_i \otimes e'_j)$, where $\alpha_{i,j}
  \in \mathbb{R}$ and $e_1,\dots e_{\dim(V)}$ is the standard basis of $V$ and
  $e'_1,\dots,e'_{\dim(W)}$ is the standard basis of $W$.  The \emph{length} of a tensor is the
  length of the vector in the vector space $V \otimes W$.  Thus the length of $a$ is $\left(
  \sum_{i=1}^{\dim(V)} \sum_{j=1}^{\dim(W)} \alpha_{i,j}^2 \right)^{1/2}$.
\end{definition}

We are now ready to define the map $\tau_{\vec{\pi}}$ and then the first and second largest
eigenvalue of $H$ with respect to $\pi$ for a general $\pi$.  In the definition, think of the tensor
product $W^{\otimes k_i}$ as a vector space of dimension $|V(H)|^{k_i}$ indexed by ordered
$k_i$-sets of vertices.

\begin{definition}
  Let $W$ be a finite dimensional vector space over $\mathbb{R}$, let $\sigma : W^k \rightarrow
  \mathbb{R}$ be any $k$-linear function, and let $\vec{\pi}$ be a proper ordered partition of $k$,
  so $\vec{\pi} = (k_1,\dots,k_t)$ for some integers $k_1,\dots,k_t$ with $t \geq 2$.  Now define a
  $t$-linear function $\sigma_{\vec{\pi}} : W^{\otimes k_1} \times \dots \times W^{\otimes k_t}
  \rightarrow \mathbb{R}$ by first defining $\sigma_{\vec{\pi}}$ when the inputs are basis vectors
  of $W^{\otimes k_i}$ and then extending linearly.  For each $i$, $B_i = \{ b_{i,1} \otimes \cdots
  \otimes b_{i,k_i} : b_{i,j} \, \text{is a standard basis vector of } $W$ \}$ is a basis of
  $W^{\otimes k_i}$, so for each $i$, pick $b_{i,1} \otimes \cdots \otimes b_{i,k_i} \in B_i$ and
  define
  \begin{align*}
    \sigma_{\vec{\pi}} \left( b_{1,1} \otimes \dots \otimes b_{1,k_1}, \dots,
                              b_{t,1} \otimes \dots \otimes b_{t,k_t} \right) 
     \stocnl = \sigma(b_{1,1},\dots,b_{1,k_1},\dots,b_{t,1},\dots,b_{t,k_t}).
  \end{align*}
  Now extend $\sigma_{\vec{\pi}}$ linearly to all of the domain.
  $\sigma_{\vec{\pi}}$ will be $t$-linear since $\sigma$ is $k$-linear.
\end{definition}

\begin{definition}
  Let $H$ be a $k$-uniform hypergraph with loops and let $\tau = \tau_H$ be the ($k$-linear)
  adjacency map of $H$.  Let $\pi$ be any (unordered) partition of $k$ and let $\vec{\pi}$ be any
  ordering of $\pi$.  The \emph{largest and second largest eigenvalues of $H$ with respect to
  $\pi$}, denoted $\lambda_{1,\pi}(H)$ and $\lambda_{2,\pi}(H)$, are defined as
  \begin{align*}
    \lambda_{1,\pi}(H) := \left\lVert \tau_{\vec{\pi}} \right\rVert
    \quad \text{and} \quad
    \lambda_{2,\pi}(H) := \left\lVert \tau_{\vec{\pi}} - \frac{k!|E(H)|}{n^k} J_{\vec{\pi}}
    \right\rVert.
  \end{align*}
\end{definition}

Both $\lambda_{1,\pi}(H)$ and $\lambda_{2,\pi}(H)$ are well defined since for any two orderings
$\vec{\pi}$ and $\vec{\pi}'$ of $\pi$, $\tau_{\vec{\pi}} = \tau_{\vec{\pi}'}$ and $J_{\vec{\pi}} =
J_{\vec{\pi}'}$ since both $\tau$ and $J$ are symmetric maps.

\begin{remarks} \hspace{1cm}

\begin{itemize}


  \item For a graph $G$ ($k = 2$ and $\pi = 1 + 1$), $\lambda_{1,1+1}(G)$ equals the largest
    eigenvalue in absolute value of the adjacency matrix $A$ of $G$ since both are equal to $\sup
    \left\{ |x^T A x| : \left\lVert x \right\rVert = 1\right\}$. Additionally, if $G$ is $d$-regular,
    then $\lambda_{2,1+1}(G)$ equals the second largest eigenvalue of $A$ in absolute value.
    Indeed, if $G$ is a $d$-regular graph, then $2|E(H)|/n^2 = \frac{d}{n}$, so $\lambda_{2,1+1}(G)
    = \left\lVert \tau_G - \frac{d}{n}J \right\rVert$. The bilinear map $\tau_G - \frac{d}{n} J$
    corresponds to the matrix $A - \frac{d}{n}J$ where $J$ is now the all-ones matrix. The largest
    eigenvalue of $A - \frac{d}{n}J$ in absolute value is the second largest eigenvalue of $A$ in
    absolute value, and this equals the spectral norm of the respective map.

  \item For any $k$-uniform hypergraph $H$, $\lambda_{1,1+\dots+1}(H)$ exactly matches the
    definition of Friedman and Wigderson~\cite{ee-friedman95,ee-friedman95-2}. \cite{ee-friedman95,
    ee-friedman95-2} did not define the second largest eigenvalue for all hypergraphs.  For
    $d$-coregular hypergraphs with loops, \cite{ee-friedman95,ee-friedman95-2} defined the second
    largest eigenvalue and it exactly corresponds to our definition of $\lambda_{2,1+\dots+1}(H)$,
    where $\frac{k!|E(H)|}{n^k} = \frac{d}{n}$ (recall that $H$ has loops which is why $n^k$ appears
    in the denominator instead of the falling factorial).  For the random hypergraph $G^{(k)}(n,p)$,
    \cite{ee-friedman95,ee-friedman95-2} also defined a second largest eigenvalue with respect to
    density $p$ as the spectral norm of $\tau_{G(n,p)} - p J$.  While different than our definition,
    $p = (1+o(1))\frac{k!|E(G(n,p))|}{n^k}$ so the definitions are similar.

  \item If $H$ is a $k$-uniform, $d$-coregular hypergraph with loops, Friedman and
    Wigderson~\cite{ee-friedman95, ee-friedman95-2} proved several facts about
    $\lambda_{1,1+\dots+1}(H)$ and $\lambda_{2,1+\dots+1}(H)$.  First, $\lambda_{1,1 + \dots + 1}(H)
    = dn^{(k-2)/2}$ and the supremum is achieved by the all-ones vectors scaled to unit length.
    They also proved several facts about $\lambda_{2,1 + \dots + 1}(H)$ including upper and lower
    bounds, an Expander Mixing Lemma which we generalize to all $\pi$ in
    Theorem~\ref{thm:expandertensormixing}, and the asymptotic value of $\lambda_{2,1 + \dots +
    1}(G(n,p))$.
\end{itemize}
\end{remarks}

\section{\propeig{$\pi$} $\Rightarrow$ \propexpand{$\pi$}}
  \label{sec:eigen2expander}

In this section we prove a generalization of the graph Expander Mixing Lemma which relates spectral
and expansion properties of graphs.  The graph version was first discovered independently by Alon
and Milman~\cite{ee-alon95} and Tanner~\cite{ee-tanner84}.  For background on graph expansion and
eigenvalues, see \cite{ee-alon86,prob-alonspencer,ee-survey06}.  The following theorem extends the
hypergraph Expander Mixing Lemma of Friedman and Widgerson~\cite{ee-friedman95,ee-friedman95-2},
which applied for $\pi = 1 + \dots + 1$.  The theorem is stated for ordered partitions $\vec{\pi}$,
but trivially gives the same result for any ordering $\vec{\pi}$ of a partition $\pi$.

\begin{thm} (Hypergraph Expander Mixing Lemma) \label{thm:expandertensormixing}
  Let $H$ be an $n$-vertex, $k$-uniform hypergraph with loops.  Let $\vec{\pi} = (k_1,\dots,
  \linebreak[1] k_t)$ be a proper ordered partition of $k$ and let $S_i \subseteq \binom{V(H)}{k_i}$
  for $1 \leq i \leq t$ (where the elements of $S_i$ are potentially multisets of size $k_i$).  Then
  \begin{align*}
    \left| e(S_1,\dots,S_t) - \frac{k!|E(H)|}{n^k} \prod_{i=1}^t |S_i| \right| \leq \lambda_{2,\pi}(H)
  \sqrt{|S_1| \cdots |S_t|},
  \end{align*}
  where $e(S_1,\dots,S_t)$ is the number of ordered tuples $(s_1, \dots, \linebreak[1] s_t)$ such that $s_1 \cup
  \dots \cup s_t \in E(H)$ and $s_i \in S_i$.
\end{thm}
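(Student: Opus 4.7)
The plan is to mirror the graph expander mixing lemma proof in the multilinear setting. By definition $\lambda_{2,\pi}(H) = \left\lVert \tau_{\vec{\pi}} - \frac{k!|E(H)|}{n^k} J_{\vec{\pi}} \right\rVert$, so I would plug carefully chosen ``indicator tensors'' for $S_1, \ldots, S_t$ into the multilinear functional $\tau_{\vec{\pi}} - \frac{k!|E(H)|}{n^k} J_{\vec{\pi}}$ and read off the bound directly from the definition of spectral norm.

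First I would construct symmetrized indicator tensors. For each $i$ and each $s \in \binom{V(H)}{k_i}$, let
\[
\mathbf{1}_s = \sum_{(v_1, \ldots, v_{k_i})} e_{v_1} \otimes \cdots \otimes e_{v_{k_i}} \in W^{\otimes k_i},
\]
where the sum runs over all distinct orderings of $s$, and set $\mathbf{1}_{S_i} = \sum_{s \in S_i} \mathbf{1}_s$. When every $s \in S_i$ is a genuine $k_i$-element set, $\mathbf{1}_s$ has exactly $k_i!$ terms and tensors from distinct $s$'s use disjoint basis coordinates, so $\left\lVert \mathbf{1}_{S_i}\right\rVert^2 = k_i! |S_i|$ (a multiset $s$ simply contributes $k_i!/\prod_v m_v(s)!$ terms; the argument is otherwise identical). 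Next I would compute the two multilinear evaluations on these indicators. Since $\tau_H(e_{u_1}, \ldots, e_{u_k}) = 1$ on every ordering of an edge's vertices and is $0$ otherwise, unrolling by multilinearity gives
\[
\tau_{\vec{\pi}}(\mathbf{1}_{S_1}, \ldots, \mathbf{1}_{S_t}) = \left(\prod_{i=1}^t k_i!\right) e(S_1, \ldots, S_t),
\]
because each admissible tuple $(s_1, \ldots, s_t)$ with $s_1 \cup \cdots \cup s_t \in E(H)$ contributes exactly $\prod_i k_i!$ from its internal orderings. The same computation with $J$ in place of $\tau_H$ (every evaluation is $1$) yields $J_{\vec{\pi}}(\mathbf{1}_{S_1}, \ldots, \mathbf{1}_{S_t}) = \left(\prod_i k_i!\right) \prod_i |S_i|$.

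The conclusion is then immediate: the definition of spectral norm gives
\[
\left\lvert\tau_{\vec{\pi}}(\mathbf{1}_{S_1}, \ldots, \mathbf{1}_{S_t}) - \tfrac{k!|E(H)|}{n^k} J_{\vec{\pi}}(\mathbf{1}_{S_1}, \ldots, \mathbf{1}_{S_t})\right\rvert \leq \lambda_{2,\pi}(H) \prod_{i=1}^t \left\lVert \mathbf{1}_{S_i} \right\rVert,
\]
and substituting the two formulas above and dividing through by $\prod_i k_i!$ (which is at least $1$) gives the claimed inequality, in fact with the stronger constant $\lambda_{2,\pi}(H)/\sqrt{\prod_i k_i!}$. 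The main obstacle is really just bookkeeping around the symmetrization: choosing the normalization of $\mathbf{1}_s$ so that the combinatorial prefactor $\prod_i k_i!$ appears identically on both the $\tau_{\vec{\pi}}$ and $J_{\vec{\pi}}$ sides (so that it cancels cleanly after division), and verifying that permitting multiset elements of $S_i$ only replaces $k_i!$ by the multinomial $k_i!/\prod_v m_v(s)!$ throughout without affecting the structural estimate.
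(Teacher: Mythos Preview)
Your approach is the paper's: evaluate $\tau_{\vec{\pi}} - \frac{k!|E(H)|}{n^k} J_{\vec{\pi}}$ on indicator tensors for the $S_i$ and read off the spectral-norm bound. The only difference is the choice of indicator. The paper takes, for each (multi)set $s \in S_i$, the \emph{single} sorted tuple and sets
\[
\chi_{S_i} = \sum_{\substack{\{v_1,\ldots,v_{k_i}\}\in S_i \\ v_1 \le \cdots \le v_{k_i}}} e_{v_1}\otimes\cdots\otimes e_{v_{k_i}},
\]
so that $\lVert\chi_{S_i}\rVert^2 = |S_i|$, $\tau_{\vec{\pi}}(\chi_{S_1},\ldots,\chi_{S_t}) = e(S_1,\ldots,S_t)$, and $J_{\vec{\pi}}(\chi_{S_1},\ldots,\chi_{S_t}) = \prod_i |S_i|$ on the nose, uniformly in whether the $s$'s are sets or multisets. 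No combinatorial prefactors to cancel.

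Your symmetrized $\mathbf{1}_{S_i}$ does buy the sharper constant $\lambda_{2,\pi}(H)/\sqrt{\prod_i k_i!}$ when every $s$ is a genuine set, which is a nice observation. But your closing remark about multisets is where the bookkeeping actually fails: the factor $k_i!/\prod_v m_v(s)!$ depends on the individual $s$, so
\[
\tau_{\vec{\pi}}(\mathbf{1}_{S_1},\ldots,\mathbf{1}_{S_t}) = \sum_{(s_1,\ldots,s_t)} \Bigl(\prod_i \tfrac{k_i!}{\prod_v m_v(s_i)!}\Bigr)\, \mathbb{1}\bigl[s_1\cup\cdots\cup s_t \in E(H)\bigr],
\]
a \emph{weighted} edge count rather than a constant times $e(S_1,\ldots,S_t)$, and likewise for $J_{\vec{\pi}}$. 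There is no global scalar to divide through by, and the inequality you obtain is not the one stated. The one-ordering indicator sidesteps this entirely; use it for the general statement and keep your symmetrized version as a sharpening in the honest-set case.
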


\begin{proof} 
Let $q = \frac{k!|E(H)|}{n^k}$, let $\tau_H$ be the adjacency map of $H$, and let $\sigma = \tau_H -
q J$.  It is easy to see that by definition, $(\tau - q J)_{\vec{\pi}} = \tau_{\vec{\pi}} - q
J_{\vec{\pi}}$, so $\lambda_{2,\pi}(H) = \left\lVert \sigma_{\vec{\pi}} \right\rVert$.  Let
$\chi_{S_i} \in W^{k_i\otimes}$ be the indicator tensor of $S_i$.  If we let $V(H) = [n]$, then
\begin{align*}
  \chi_{S_i} = \sum_{\substack{\{v_1, \dots, v_{k_i} \} \in S_i \\
                              v_1 \leq \dots \leq v_{k_i} }}
                  (e_{v_1} \otimes \dots \otimes e_{v_{k_i}}).
\end{align*}
By the linearity of $\sigma_{\vec{\pi}}$ and the definition of $J_{\vec{\pi}}$,
\begin{align*}
  \sigma_{\vec{\pi}}(\chi_{S_1},\dots,\chi_{S_t}) &=
  \tau_{\vec{\pi}}(\chi_{S_1},\dots,\chi_{S_t}) - q J_{\vec{\pi}}(\chi_{S_1},\dots,\chi_{S_t})
  \stocnla = e(S_1,\dots,S_t) - q \prod_{i=1}^t |S_i|.
\end{align*}
Before upper bounding this by $\lambda_{2,\pi}(H)$, we must scale each indicator tensor to be unit
length.  Since $\{e_{j_1} \otimes \dots \otimes e_{j_{k_i}} : 1 \leq j_1,\dots,j_{k_i} \leq n \}$
forms a basis of $W^{\otimes k_i}$, we have $\left\lVert \chi_{S_i} \right\rVert = \sqrt{|S_i|}$.
Thus
\begin{align*}
  \left| \sigma_{\vec{\pi}}\left(\frac{\chi_{S_1}}{\left\lVert \chi_{S_1} \right\rVert}, \dots,
                         \frac{\chi_{S_t}}{\left\lVert \chi_{S_t} \right\rVert}\right) \right| \leq
                         \left\lVert \sigma_{\vec{\pi}} \right\rVert
                         = \lambda_{2,\pi}(H).
\end{align*}
Consequently,
\begin{align*}
  \left|\sigma_{\vec{\pi}}(\chi_{S_1},\dots,\chi_{S_t})\right|
  &\leq \lambda_{2,\pi}(H) \left\lVert \chi_{S_1} \right\rVert \cdots \left\lVert \chi_{S_t} \right\rVert
  \stocnla = \lambda_{2,\pi}(H) \sqrt{\left| S_1 \right|\cdots \left| S_t \right|},
\end{align*}
and the proof is complete.
\end{proof} 

\begin{lemma} \label{lem:lambda1boundimpliesdense}
  Let $\mathcal{H} = \{H_n\}$ be a sequence of $k$-uniform hypergraphs with loops with $|V(H_n)| =
  n$ and $|E(H_n)| \geq p \binom{n}{k} + o(n^k)$.  Let $\tau_n$ be the adjacency map of $H_n$ and
  let $\vec{\pi} = (k_1,\dots,k_t)$ be a proper ordered partition of $k$.  If $\lambda_{1,\pi}(H_n)
  = pn^{k/2} + o(n^{k/2})$, then $|E(H_n)| = p \binom{n}{k} + o(n^k)$.
\end{lemma}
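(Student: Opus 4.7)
The plan is to evaluate the adjacency map on scaled all-ones tensors and compare against the spectral norm. Let $\mathbf{1} \in W$ denote the all-ones vector, so that $\mathbf{1}^{\otimes k_i} \in W^{\otimes k_i}$ has Euclidean norm $n^{k_i/2}$. Unpacking the definition of $\tau_{\vec{\pi}}$ (first on tensors of standard basis vectors, then extended linearly), I would compute
\begin{align*}
\tau_{\vec{\pi}}\bigl(\mathbf{1}^{\otimes k_1}, \ldots, \mathbf{1}^{\otimes k_t}\bigr)
 = \tau_{H_n}(\mathbf{1}, \ldots, \mathbf{1})
 = \sum_{(v_1,\ldots,v_k) \in V(H_n)^k} \tau_{H_n}(e_{v_1}, \ldots, e_{v_k}),
\end{align*}
which is exactly the number of ordered $k$-tuples of vertices whose underlying multiset is an edge of $H_n$.

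Next I would relate this count to $|E(H_n)|$. Each edge of $H_n$ with $k$ distinct vertices contributes exactly $k!$ ordered tuples; each edge realized by a multiset with a repeated vertex contributes at most $k!$. The number of $k$-multisets on $n$ vertices with at least one repetition is $O(n^{k-1})$, so those edges contribute at most $O(n^{k-1})$ to $|E(H_n)|$, giving
\begin{align*}
k!\,|E(H_n)| - O(n^{k-1}) \;\leq\; \tau_{H_n}(\mathbf{1},\ldots,\mathbf{1}) \;\leq\; k!\,|E(H_n)|.
\end{align*}
Applying the spectral norm bound $\tau_{\vec{\pi}}(x_1,\ldots,x_t) \leq \|\tau_{\vec{\pi}}\|\,\prod_i \|x_i\|$ to the unit-normalized all-ones tensors and using $\lambda_{1,\pi}(H_n) = \|\tau_{\vec{\pi}}\|$, I obtain
\begin{align*}
\tau_{H_n}(\mathbf{1},\ldots,\mathbf{1}) \;\leq\; n^{k/2} \lambda_{1,\pi}(H_n) \;=\; n^{k/2}\bigl(pn^{k/2} + o(n^{k/2})\bigr) \;=\; pn^k + o(n^k).
\end{align*}
Combining with the lower bound on $\tau_{H_n}(\mathbf{1},\ldots,\mathbf{1})$ and using $k!\binom{n}{k} = n^k + O(n^{k-1})$ yields $|E(H_n)| \leq p\binom{n}{k} + o(n^k)$, which together with the standing hypothesis $|E(H_n)| \geq p\binom{n}{k} + o(n^k)$ gives equality.

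The only real subtlety is bookkeeping around loops: since edges are multisets, the passage from $\tau_{H_n}(\mathbf{1},\ldots,\mathbf{1})$ to $k!\,|E(H_n)|$ is not exact, but the defect is bounded by the crude count $O(n^{k-1})$ of multisets with a repeat, which is absorbed into the $o(n^k)$ error. Beyond that, the argument is simply plugging the distinguished test tensor into the definition of spectral norm and tracking normalizations.
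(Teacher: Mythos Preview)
Your argument is correct and is essentially the same as the paper's: evaluate $\tau_{\vec{\pi}}$ on (normalized) all-ones tensors, bound by $\lambda_{1,\pi}$, and combine with the density lower bound. The paper writes $\tau(\vec{1},\dots,\vec{1}) = k!\,|E(H)|$ without comment, whereas you are more careful in noting that loops cause a defect of size $O(n^{k-1})$; this extra care is harmless and arguably an improvement.
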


\begin{proof} 
Throughout this proof the subscripts on $n$ are dropped for simplicity.  Let $W$ be the vector space
over $\mathbb{R}$ of dimension $n$.  For $1 \leq i \leq t$, let $\vec{1}_{k_i}$ denote the all-ones
vector in $W^{\otimes k_i}$, so $\lVert \vec{1}_{k_i} \rVert = n^{k_i/2}$.  Then
\begin{align*}
  \tau_{\vec{\pi}}\left( \frac{\vec{1}_{k_1}}{n^{k_1/2}}, \dots, \frac{\vec{1}_{k_t}}{n^{k_t/2}} \right)
  &= \frac{1}{n^{k/2}} \tau_{\vec{\pi}}\left( \vec{1}_{k_1}, \dots, \vec{1}_{k_t} \right)
  \stocnla = \frac{1}{n^{k/2}} \tau(\vec{1}_1, \dots, \vec{1}_1) \\
  &= \frac{1}{n^{k/2}} \sum_{i_1,\dots,i_k=1}^n \tau(e_{i_1},\dots,e_{i_k}) \\
  &= \frac{1}{n^{k/2}} k! |E(H)|.
\end{align*}
Thus the spectral norm of $\tau_{\vec{\pi}}$ is at least $k!|E(H)|/n^{k/2}$, so
\begin{align*}
  pn^{k/2} \leq \frac{k!|E(H)|}{n^{k/2}} + o(n^{k/2})
  &\leq \left\lVert \tau_{\vec{\pi}} \right\rVert + o(n^{k/2}) 
  \stocnla = pn^{k/2} + o(n^{k/2})
\end{align*}
This implies equality (up to $o(n^{k/2})$) throughout the above expression.  In particular,
$|E(H_n)| \linebreak[1] = p \binom{n}{k} + o(n^{k})$.
\end{proof} 

\begin{proof}[\prooftext that \propeig{$\pi$} $\Rightarrow$ \propexpand{$\pi$}] 
First, \propeig{$\pi$} contains the assertion that $\lambda_{1,\pi}(H_n) = pn^{k/2} + o(n^{k/2})$
which by Lemma~\ref{lem:lambda1boundimpliesdense} implies $|E(H_n)| = p\binom{n}{k} + o(n^{k})$.
Consequently, $k! |E(H_n)|/n^{k} = (1+o(1)) p$ and Theorem~\ref{thm:expandertensormixing} imply that
\begin{align} \label{eq:eigtoexpand}
  \Big| e(S_1,\dots,S_t) - (1+o(1))p \prod_{i=1}^t |S_i| \Big| \leq \lambda_{2,\pi}(H)
  \sqrt{|S_1| \cdots |S_t|}
\end{align}
for any choice of $S_i \subseteq \binom{V(H_n)}{k_i}$, $i = 1, \dots, t$.
Since $\pi$ is a partition of $k$, $\sqrt{|S_1|\cdots|S_t|} = O(n^{k/2})$.  Also, \propeig{$\pi$}
states that $\lambda_{2,\pi}(H) = o(n^{k/2})$.  Thus \eqref{eq:eigtoexpand} becomes
\begin{align*}
  \Big| e(S_1,\dots,S_t) - p |S_1| \cdots |S_t| \Big|  = o(n^k),
\end{align*}
which proves \propexpand{$\pi$}.
\end{proof} 

\section{\propexpand{$\pi$} $\Rightarrow$ \propcount{$\pi$-\texttt{linear}}}
  \label{sec:expander2c4}

The proof that \propexpand{$\pi$} $\Rightarrow$ \propcount{$\pi$-linear} follows from an embedding
lemma for hypergraphs.  
\begin{notarxiv}
  The proof of Proposition~\ref{prop:embeddinglemma} below is a generalization of an argument by
  Kohayakawa et al.~\cite{hqsi-kohayakawa10} who proved it in the special case of linear
  hypergraphs, so we omit the proof.  A detailed proof appears
  online~\cite{hqsi-lenz-quasi12-arxiv}.
\end{notarxiv}
\begin{onlyarxiv}
  The proof of Proposition~\ref{prop:embeddinglemma} below is a generalization of an argument by
  Kohayakawa et al.~\cite{hqsi-kohayakawa10} who proved it in the special case of linear
  hypergraphs.
\end{onlyarxiv}
The proposition below
is stated for ordered partitions $\vec{\pi}$, but it is easy to see that the proposition is
independent of the ordering chosen for $\vec{\pi}$.

\begin{prop} \label{prop:embeddinglemma}
  Let $\vec{\pi} = (k_1,\dots,k_t)$ be a proper ordered partition of $k$, let $0 < p < 1$, and let $F$ be
  any fixed $k$-uniform, $\pi$-linear hypergraph with $f$ vertices and $m$ edges.

  Let $\mathcal{H} = \{H_n\}_{n\rightarrow\infty}$ be a sequence of $k$-uniform hypergraphs with
  loops with $|V(H_n)| = n$, $|E(H_n)| = p\binom{n}{k} + o(n^k)$, and for which
  \propexpand{$\vec{\pi}$} holds.  In other words, for every $S_1 \subseteq \binom{V(H)}{k_1},\dots,
  \linebreak[1] S_t \subseteq \binom{V(H)}{k_t}$, we have $e(S_1,\dots,S_t) = p |S_1| \cdots |S_t| +
  o(n^k)$.  Then the number of labeled copies of $F$ in $H$ is $p^m n^f + o(n^f)$.
\end{prop}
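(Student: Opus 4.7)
I would prove the embedding lemma by induction on the number of edges $m$, generalizing the argument of Kohayakawa et al.~\cite{hqsi-kohayakawa10} from the linear case ($\pi = 1+\cdots+1$) to the $\pi$-linear setting. The base case $m=0$ is trivial: the number of injections $V(F) \to V(H_n)$ is $n(n-1)\cdots(n-f+1) = n^f + o(n^f)$, matching $p^0 n^f$.

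For the inductive step, I would fix a $\pi$-linear ordering $E_1,\dots,E_m$ of the edges with associated partitions $A_{i,1},\dots,A_{i,t}$ of each $E_i$, and peel off the last edge $E_m$. Let $F^*$ be $F$ with $E_m$ deleted (same vertex set); $F^*$ is $\pi$-linear with $m-1$ edges, so by induction it has $p^{m-1} n^f + o(n^f)$ labeled copies in $H_n$. Writing $U = V(F) \setminus E_m^{\text{new}}$, with $E_m^{\text{new}}$ the vertices of $E_m$ not appearing in any earlier edge, I would parametrize a labeled copy of $F$ by the pair $(\alpha|_U, \alpha|_{E_m^{\text{new}}})$. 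The structural role of $\pi$-linearity here is that, for every earlier edge $E_j$ meeting $E_m$, the intersection $E_j \cap E_m$ lies in a single part $A_{m,s(j)}$; this guarantees that the constraints on a valid extension $\alpha|_{E_m^{\text{new}}}$ (given $\alpha|_U$) factor across the $t$ parts of $E_m$. Consequently, for each $\alpha|_U$, the number of valid extensions can be expressed, up to fixed combinatorial prefactors, as $e(S_1(\alpha|_U),\dots,S_t(\alpha|_U))$, with each $S_s(\alpha|_U) \subseteq \binom{V(H_n)}{k_s}$ encoding the allowable $k_s$-images of $A_{m,s}$. Invoking \propexpand{$\vec{\pi}$} on these sets and combining with the inductive count for $F^*$ should yield the desired $p^m n^f + o(n^f)$.

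The hard part will be the error analysis. A naive per-embedding invocation of \propexpand{$\vec{\pi}$} incurs slack $o(n^k)$ for each of the $\Theta(n^{|U|})$ choices of $\alpha|_U$, which aggregates to $o(n^{|U|+k})$---larger than $o(n^f)$ whenever $E_m$ shares a vertex with an earlier edge. To circumvent this, I plan to strengthen the inductive claim to a partite form: for any colour classes $W_v \subseteq V(H_n)$ assigned to the vertices $v \in V(F)$, the number of colour-preserving injective copies of $F$ equals $p^m \prod_v |W_v| + o(n^f)$. With this strengthened hypothesis in hand, one can interchange the order of summation at the inductive step and apply \propexpand{$\vec{\pi}$} at the coarse level of choices for the image $\bigcup_s X_s$ of $E_m$ rather than once per $\alpha|_U$; the $\pi$-linear decoupling ensures that the resulting weight functions indexed by $(X_1,\dots,X_t)$ factor across the parts and are bounded combinations of indicators of subsets of $\binom{V(H_n)}{k_s}$, so only a bounded number of applications of \propexpand{$\vec{\pi}$} are required. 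The colour-constraint aggregation then restores the $o(n^f)$ error bound in aggregate, completing the inductive step. This strategy mirrors the KNRS proof in the linear case and extends naturally once the decoupling provided by $\pi$-linearity is identified.
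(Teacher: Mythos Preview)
Your overall plan---induction on $m$, peel off the last edge $E_m$, and use $\pi$-linearity to make the extension constraints factor across the parts $A_{m,1},\dots,A_{m,t}$---matches the paper's, and you correctly diagnose the error-aggregation obstacle that arises when one parametrizes by $\alpha|_U$ with $U = V(F)\setminus E_m^{\text{new}}$. However, your proposed repair (a partite strengthening plus an interchange of summation) is more elaborate than needed, and the key assertion that the resulting weight on $(X_1,\dots,X_t)$ ``factors across the parts'' is not correct as stated: the number of embeddings of $F_-$ sending $A_{m,s}$ into $X_s$ equals $\sum_{Q_*}\prod_s h_s(Q_*,X_s)$ for suitable indicators $h_s$, which is a \emph{sum} of products over the $\Theta(n^{f-k})$ embeddings $Q_*$ of $V(F)\setminus E_m$, not itself a product $w_1(X_1)\cdots w_t(X_t)$, nor a bounded combination of such products. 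Carrying the interchange through correctly forces you to condition on $Q_*$ anyway, at which point the partite strengthening becomes superfluous.

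The paper sidesteps the whole issue by choosing that parametrization from the start. Instead of summing over $\alpha|_U$, it sums over labeled copies $Q_*$ of $F_* := F[V(F)\setminus E_m]$, where \emph{all} $k$ vertices of $E_m$ are deleted, not just the new ones. There are at most $n^{f-k}$ such $Q_*$; for each, the number of extensions to a copy of $F$ is exactly $e(S_1(Q_*),\dots,S_t(Q_*))\prod_i\Delta_i$, where $S_i(Q_*)\subseteq\binom{V(H)}{k_i}$ is the set of $k_i$-sets $Y$ disjoint from the image of $Q_*$ for which $Q_*$ extends to an edge-preserving injection on $V(F_*)\cup A_{m,i}$ with $A_{m,i}\mapsto Y$, and $\Delta_i$ is a constant depending only on $F$. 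Here $\pi$-linearity is precisely what makes each $S_i(Q_*)$ well defined independently of the other coordinates. One application of \propexpand{$\vec{\pi}$} per $Q_*$ yields total error $n^{f-k}\cdot o(n^k)=o(n^f)$, and the main term $p\sum_{Q_*}|S_1(Q_*)|\cdots|S_t(Q_*)|\prod_i\Delta_i$ is recognized as $p$ times the number of labeled copies of $F_- := F\setminus\{E_m\}$ (same vertex set), which is $p^{m-1}n^f+o(n^f)$ by induction. No colouring or strengthened hypothesis is required; the entire trick is to delete all of $V(E_m)$ rather than only $E_m^{\text{new}}$.
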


\begin{onlyarxiv}
\begin{proof} 
The proof is by induction on the number of edges of $F$.  If $F$ has zero or one edge, then the
result is trivial.  So assume that $F$ has at least two edges and let $E$ be the last edge in the
ordering provided by the $\pi$-linearity of $F$.  Let $F_*$ be the hypergraph formed by deleting all
vertices of $E$ from $F$.  Let $Q_*$ be a labeled copy of $F_*$ in $H$ by which we mean that $Q_*$
is an injective edge preserving map $Q_* : V(F_*) \rightarrow V(H)$.  We can count the number of
labeled copies of $F$ in $H$ by counting for each $Q_*$, the number of ways $Q_*$ extends to a
labeled copy of $F$.  More precisely, we count the number of edge preserving injections $Q : V(F)
\rightarrow V(H)$ which when restricted to $V(F_*)$ match the injection $Q_*$.  Since $F$ is
$\pi$-linear, the edge $E$ can be divided into $A_1,\dots,A_t$ such that $|A_i| = k_i$ and the edges
of $F$ intersecting $E$ can be divided into sets $R_1,\dots,R_t$ such that every edge in $R_i$
intersects $E$ in a subset of $A_i$.

Consider some $Q_*$ in $H$; we will count how many ways it extends to a labeled copy of $F$.  For $1
\leq i \leq t$, define $S_i(Q_*)$ to be the following collection of $k_i$-sets. Let $Y \subseteq
V(H)$ be a set of $k_i$ vertices and add $Y$ to $S_i(Q_*)$ if $Y \cap Im(Q_*) = \emptyset$ and there
exists an edge preserving injection $V(F_*) \cup A_i \rightarrow Im(Q_*) \cup Y$ which when
restricted to $V(F_*)$ matches the map $Q_*$.  More informally, $S_i(Q_*)$ consists of all
$k_i$-sets $Y$ of vertices which can be used to extend $Q_*$ to embed a labeled copy of $F_* \cup
R_i$.

Every edge counted by $e(S_1(Q_*), \dots, S_t(Q_*))$ creates several labeled copies of $F$
which extend $Q_*$.  First, let $\Delta_i$ be the number of edge preserving bijections $V(F_*) \cup
A_i \rightarrow V(F_*) \cup A_i$ which are the identity map when restricted to $V(F_*)$.  More
informally, if we are given a non-labeled $F_* \cup R_i$ together with a labeling of the vertices of
$F_*$, $\Delta_i$ is the number of ways of labeling the vertices of $A_i$.  The numbers
$\Delta_1,\dots,\Delta_t$ are fixed numbers depending only on $F$; $\Delta_i$ depends on the way
that edges in $R_i$ intersect.  For example, if every edge in $R_i$ meets $E$ in exactly $A_i$, then
$\Delta_i = k_i! $.  If the edges in $R_i$ intersect differently, $\Delta_i$ will change but still
depend only on $F$.  Now we count the number of edge preserving injections $Q : V(F) \rightarrow V(H)$
where $Q|_{V(F_*)} = Q_*$ as follows.  First pick an edge to use for $E$; this consists of picking
one of the edges which take a $k_1$-set $Y_1$ from $S_1(Q_*)$, a $k_2$-set $Y_2$ from $S_2(Q_*)$,
and so on.  There are exactly $e(S_1(Q_*),\dots,S_t(Q_*))$ such edges.  We are embedding a labeled
copy of $F$ so next we order the vertices inside the set $Y_1$ chosen from $S_1(Q_*)$; there are
$\Delta_1$ ways of ordering the vertices of $Y_1$.  Similarly, we order the vertices inside the
other sets chosen from $S_i(Q_*)$ for a total of $\prod \Delta_i$ orderings.  These are all the
labeled copies of $F$ which extend $Q_*$, so there are exactly $e(S_1(Q_*),\dots,S_t(Q_*)) \prod
\Delta_i$ labeled copes of $F$ extending $Q_*$, i.e.\ exactly $e(S_1(Q_*),\dots,S_t(Q_*)) \prod
\Delta_i$ edge preserving injections $V(F) \rightarrow V(H)$ which when restricted to $V(F_*)$ are
$Q_*$.  By assupmtion,
\begin{align*}
   e\big(S_1(Q_*), \dots, S_t(Q_*)\big) =  p |S_1(Q_*)| \cdots |S_t(Q_*)|  + o(n^k).
\end{align*}
Therefore, we can count the number of labeled copies of $F$ in $H$ by summing the value of
$e(S_1(Q_*),\linebreak[1]\dots,S_t(Q_*))\prod \Delta_i$ over all $Q_*$ in $H$.  By the notation
$\#\{F \, \text{in} \, H\}$ we mean the number of labeled copies of $F$ in $H$.
\begin{align}
  \#\{F \, \text{in} \, H\} &= \sum_{Q_*} e(S_1(Q_*),\dots,S_t(Q_*)) \prod_i \Delta_i \nonumber \\
    &= \sum_{Q_*}
  \left(p \left| S_1(Q_*) \right| \cdots \left| S_t(Q_*) \right| \prod_i \Delta_i + o(n^k) \right)
  \nonumber \\
  &= p \sum_{Q_*}|S_1(Q_*)| \cdots |S_t(Q_*)| \prod_i \Delta_i + o(n^f), \label{eq:ex2c4prodsizesi}
\end{align}
since the number of $Q_*$ is bounded above by $n^{|V(F_*)|} = n^{f-k}$ so summing the term $o(n^k)$
over $Q_*$ is bounded by $o(n^f)$.  Let $F_-$ be the hypergraph formed by removing the edge $E$ from
$F$ but keeping the same vertex set.  Then similarly to the above, the number of labeled copies of
$F_-$ extending $Q_*$ is $|S_1(Q_*)|\cdots|S_t(Q_*)| \prod \Delta_i$ since every labeled copy of
$F_-$ is formed by picking a set $Y_1$ from $S_1(Q_*)$, ordering it in $\Delta_i$ ways, picking a
set $Y_2$ from $S_2(Q_*)$ and ordering it, and so on.  Therefore the number of labeled copies of
$F_-$ in $H$ is counted by summing over $Q_*$ and counting $|S_1(Q_*)|\cdots|S_t(Q_*)| \prod
\Delta_i$. Thus \eqref{eq:ex2c4prodsizesi} continues as
\begin{align}
  \#\{F \, \text{in} \, H\} &= p \, \#\{F_- \,\,\, \text{in} \, H\} + o(n^f).
  \label{eq:ex2c4countFminus}
\end{align}
By hypothisis, $F$ is $\pi$-linear so $F_-$ is $\pi$-linear.  Thus by induction the number of
labeled copies of $F_-$ in $H$ is $p^{m-1} n^{f} + o(n^f)$.  Inserting this into
\eqref{eq:ex2c4countFminus} shows that the number of labeled copies of $F$ in $H$ is $p^{m} n^{f} + o(n^f)$.
\end{proof} 
\end{onlyarxiv}

\section{\texttt{Cycle}$_{4\ell}$[$\pi$] $\Rightarrow$ \propeig{$\pi$}}
  \label{sec:c4toeigenvalue}

\newcommand{\tr}[1]{\text{Tr}\left[#1\right]}

In this section, we prove that if $\mathcal{H}$ is a sequence of $d$-coregular, $k$-uniform
hypergraphs with loops which satisfies \propcycle[4\ell]{$\pi$}, then $\mathcal{H}$ satisfies
\propeig{$\pi$}.  Indeed, if $H$ is $d$-coregular with loops, then $\lambda_{1,\pi}(H) = dn^{k/2-1}$
and the vectors maximizing $\tau_{\vec{\pi}}$ are the all-ones vectors scaled to unit length
(see~\cite{ee-friedman95,ee-friedman95-2}).  These facts simplify the proof of
\propcycle[4\ell]{$\pi$} $\Rightarrow$ \propeig{$\pi$} which appears in this section.  In a
companion paper~\cite{hqsi-lenz-quasi12-nonregular}, we develop the additional algebra required to
prove \propcycle[4\ell]{$\pi$} $\Rightarrow$ \propeig{$\pi$} for all sequences.  Throughout this
section, let $0 < p < 1$ be a fixed integer and define $d = d(n) = \left\lfloor pn \right\rfloor$.

First, let us recall the proof of \propcycle{$1+1$} $\Rightarrow$ \propeig{$1+1$} for graphs.  Let
$A$ be the adjacency matrix of a $d$-regular graph $G$. Then $\tr{A^4}$ is the number of circuits of
length $4$ so \propcycle{$1+1$} implies that $\tr{A^4} = d^4 + o(n^4)$.  Since $G$ is $d$-regular,
the largest eigenvalue of $A^4$ is $d^4$ so that all eigenvalues of $A$ besides $d$ are $o(n)$ in
absolute value, completing the proof that \propeig{$1+1$} holds.  Our proof for hypergraphs follows
the same outline once some algebraic facts about multilinear maps are proved.  In
Section~\ref{sub:c4toeigmultiproduct}, we define (non-standard) products and powers of multilinear
maps.  In Section~\ref{sub:c4toeigpowerscountwalks}, we show that the powers of multilinear maps
count walks and that the trace of the powers of multilinear maps counts circuits.  Finally,
Section~\ref{sub:c4toeigfinalproof} contains the proof that \propcycle[4\ell]{$\pi$} $\Rightarrow$
\propeig{$\pi$}.



\subsection{Products and powers of multilinear maps}
\label{sub:c4toeigmultiproduct}

In this section, we give (non-standard) definitions of the products and powers of multilinear maps.

\begin{definition}
  Let $V_1,\dots,V_t$ be finite dimensional vector spaces over $\mathbb{R}$ and let $\phi,\psi : V_1
  \times \dots \times V_t \rightarrow \mathbb{R}$ be $t$-linear maps.  The \emph{product} of $\phi$
  and $\psi$, written $\phi \ast \psi$, is a $(t-1)$-linear map defined as follows.  Let
  $u_1,\dots,u_{t-1}$ be vectors where $u_i \in V_i$. Let $\{b_1,\dots,b_{\dim(V_t)}\}$ be any
  orthonormal basis of $V_t$.
  \begin{gather*}
    \phi \ast \psi : (V_1 \otimes V_1) \times (V_2 \otimes V_2) \times \dots
                     \times (V_{t-1} \otimes V_{t-1}) \rightarrow \mathbb{R} \\
    \phi \ast \psi(u_1 \otimes v_1, \dots, u_{t-1} \otimes v_{t-1}) 
       \stocnl := \sum_{j=1}^{\dim(V_t)} \phi(u_1,\dots,u_{t-1},b_j) \psi(v_1,\dots,v_{t-1},b_j)
  \end{gather*}
  Extend the map $\phi \ast \psi$ linearly to all of the domain to produce a $(t-1)$-linear map.
\end{definition}

It is straigtforward to see that the above definition is well defined: the map is the same for any
choice of orthonormal basis by the linearity of $\phi$ and $\psi$.  A proof of this fact appears
in~\cite{hqsi-lenz-quasi12-nonregular}.

\begin{definition}
  Let $V_1,\dots,V_t$ be finite dimensional vector spaces over $\mathbb{R}$ and let $\phi : V_1
  \times \dots \times V_t \rightarrow \mathbb{R}$ be a $t$-linear map and let $s$ be an integer $0
  \leq s \leq t-1$.  Define
  \begin{gather*}
    \phi^{2^s} : V_1^{\otimes2^{s}} \times \dots \times V_{t-s}^{\otimes2^{s}} \rightarrow
    \mathbb{R}
  \end{gather*}
  where $\phi^{2^0} := \phi$ and $\phi^{2^s} := \phi^{2^{s-1}} \ast \phi^{2^{s-1}}$.
\end{definition}

Note that we only define this for exponents which are powers of two because the product $\ast$ is
only defined when the domains of the maps are the same.  An expression like $\phi^3 = \phi \ast
(\phi \ast \phi)$ does not make sense because $\phi$ and $\phi \ast \phi$ have different domains.
This defines the power $\phi^{2^{t-1}}$, which is a linear map $V_1^{\otimes 2^{t-1}} \rightarrow
\mathbb{R}$.

\begin{definition}
  Let $V_1,\dots,V_t$ be finite dimensional vector spaces over $\mathbb{R}$ and let $\phi : V_1
  \times \dots \times V_t \rightarrow \mathbb{R}$ be a $t$-linear map and define $A[\phi^{2^{t-1}}]$
  to be the following square matrix/bilinear map.  Let $u_1,\dots,u_{2^{t-2}},v_1,\dots,v_{2^{t-2}}$
  be vectors where $u_i, v_i \in V_1$.
  \begin{gather*}
    A[\phi^{2^{t-1}}] : V_1^{\otimes 2^{t-2}} \times V_1^{\otimes 2^{t-2}} \rightarrow \mathbb{R} \\
    A[\phi^{2^{t-1}}](u_1 \otimes \dots \otimes u_{2^{t-2}}, v_1 \otimes \dots v_{2^{t-2}})
    \stocnl := \phi^{2^{t-1}}(u_1 \otimes v_1 \otimes u_2 \otimes v_2 \otimes \dots \otimes u_{2^{t-2}}
    \otimes v_{2^{t-2}}).
  \end{gather*}
  Extend the map linearly to the entire domain to produce a bilinear map.
\end{definition}

It is straigtforward to check that by definition, $A[\phi^{2^{t-1}}]$ is a square symmetric
real-valued matrix for any $\phi$; a proof of this fact appears
in~\cite{hqsi-lenz-quasi12-nonregular}.

\subsection{Counting walks and circuits}
\label{sub:c4toeigpowerscountwalks}

This section contains the proof of the following proposition.

\begin{prop} \label{prop:powerscountcycles}
  Let $H$ be a $k$-uniform hypergraph with loops, let $\vec{\pi}$ be a proper ordered partition of
  $k$, and let $\ell \geq 2$ be an integer.  Let $\tau$ be the adjacency map of $H$. Then
  $\tr{A[\tau_{\vec{\pi}}^{2^{t-1}}]^\ell}$ is the number of labeled circuits of type $\vec{\pi}$
  and length $2\ell$ in $H$.
\end{prop}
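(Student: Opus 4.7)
The plan is to interpret the linear-algebraic gadgets combinatorially and then invoke the standard ``trace of $M^\ell$ counts closed walks'' identity. I would prove two intermediate combinatorial claims and then assemble them.

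\textbf{Claim A.} For any basis vectors $e_{\vec{w}_1},\dots,e_{\vec{w}_{2^{t-1}}}$ of $W^{\otimes k_1}$ (indexed naturally by binary strings of length $t-1$), the value $\tau_{\vec{\pi}}^{2^{t-1}}(e_{\vec{w}_1}\otimes\cdots\otimes e_{\vec{w}_{2^{t-1}}})$ equals the number of edge-preserving maps $V(S_{\vec{\pi}})\to V(H)$ sending the $A$-positions (which are themselves indexed by binary strings of length $t-1$) to the prescribed $k_1$-tuples. I would prove Claim A by induction on $s$, iteratively unfolding the recursion $\tau_{\vec{\pi}}^{2^s}=\tau_{\vec{\pi}}^{2^{s-1}}\ast\tau_{\vec{\pi}}^{2^{s-1}}$ from $s=1$ up to $s=t-1$. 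Each application of $\ast$ introduces a fresh sum over basis vectors of the last remaining tensor factor and duplicates every sum that was already present; the net effect is a nested sum of $0/1$-valued products with exactly $2^s$ factors, and at $s=t-1$ each factor is an indicator that a particular $k$-multiset formed from the prescribed $k_1$-tuple together with one $k_j$-tuple from each outer sum (for $j=2,\dots,t$) is an edge of $H$. The combinatorial content is that the binary-string bookkeeping matches the definition of $S_{\vec{\pi}}$: an input position with code $a_1\cdots a_{t-1}$ is joined in the factor to the $B_j$-tuple whose code is $a_1\cdots a_{j-1}a_{j+1}\cdots a_{t-1}$.

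\textbf{Claim B.} By the definition of $A[\,\cdot\,]$, the interleaving $u_1\otimes v_1\otimes\cdots\otimes u_{2^{t-2}}\otimes v_{2^{t-2}}$ partitions the $2^{t-1}$ input positions into two ordered halves of size $2^{t-2}$, and (fixing the convention that the most significant bit introduced by the squaring plays the role of the ``last bit'' in the step) these halves correspond exactly to the attach tuples $A^{(0)}$ and $A^{(1)}$ of $S_{\vec{\pi}}$. Combined with Claim A, $A[\tau_{\vec{\pi}}^{2^{t-1}}](e_{\vec{u}},e_{\vec{v}})$ is the number of edge-preserving maps $S_{\vec{\pi}}\to H$ whose attach tuples are $\vec{u}$ and $\vec{v}$.

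\textbf{Assembly.} The standard matrix identity gives
\begin{equation*}
\bigl(A[\tau_{\vec{\pi}}^{2^{t-1}}]^\ell\bigr)_{\vec{u},\vec{v}}
= \sum_{\vec{x}_1,\dots,\vec{x}_{\ell-1}}
A[\tau_{\vec{\pi}}^{2^{t-1}}](e_{\vec{u}},e_{\vec{x}_1})\,
A[\tau_{\vec{\pi}}^{2^{t-1}}](e_{\vec{x}_1},e_{\vec{x}_2})\,
\cdots\,
A[\tau_{\vec{\pi}}^{2^{t-1}}](e_{\vec{x}_{\ell-1}},e_{\vec{v}}),
\end{equation*}
which by Claim B counts sequences of $\ell$ steps whose successive attach tuples match. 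Since identifying matching attach tuples is precisely the definition of $P_{\vec{\pi},2\ell}$, this is the number of walks of type $\vec{\pi}$ and length $2\ell$ in $H$ from $\vec{u}$ to $\vec{v}$. Taking the trace identifies $\vec{u}=\vec{v}$ and sums, closing the walk into a circuit; by the definition of $C_{\vec{\pi},2\ell}$ as $P_{\vec{\pi},2\ell}$ with its attach tuples identified, this is exactly the number of labeled circuits of type $\vec{\pi}$ and length $2\ell$ in $H$.

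The main obstacle is Claim A, specifically the bookkeeping: at each squaring a ``new bit'' is appended and ``old bits'' are duplicated, so one must verify that the resulting indexing on the $2^{t-1}$ input positions and on the $(t-1)\cdot 2^{t-2}$ outer summation variables is exactly the bit-deletion rule defining the edges of $S_{\vec{\pi}}$. This is best handled by an explicit induction that carries along, at every stage $s$, an isomorphism between the indices of the partially-unfolded sum and the vertex set of a ``partial step'' in which the variables $V_{t-s+1},\dots,V_t$ have been summed out.
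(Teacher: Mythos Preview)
Your proposal is correct and follows essentially the same approach as the paper: the paper's Lemma on ``partial steps'' $D_{\vec{\pi},s}$ is exactly your Claim A proved by induction on $s$, the identification of attach tuples via the interleaving in $A[\,\cdot\,]$ is your Claim B, and the assembly via matrix powers and trace is identical. The only point to be careful about is the bit convention (the paper \emph{prepends} the new bit at each squaring, so the \emph{last} bit, which determines the attach tuples, is the one introduced at the first squaring and is what the interleaving in $A[\,\cdot\,]$ separates), but you already flag this bookkeeping as the main obstacle.
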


The proof of this proposition comes down to showing that the function $\tau_{\vec{\pi}}^{2^{t-1}}$
counts the step $S_{\vec{\pi}}$.  We do this by induction by describing exactly the hypergraph
counted by $\tau_{\vec{\pi}}^{2^s}$, which is the following hypergraph.

\begin{definition}
  For $\vec{\pi} = (1,\dots,1)$ with $t$ parts, let $0 \leq s \leq t-1$ and define the hypergraph
  $D_{\vec{\pi},s}$ as follows.  Let $A_1,\dots,A_{t-s}$ be disjoint sets of size $2^{s}$ where
  elements are labeled by binary strings of length $s$ and let $B_{t-s+1},\dots,B_t$ be disjoint
  sets of size $2^{s-1}$ where elements are labeled by binary strings of length $s-1$.  The vertex
  set of $D_{\vec{\pi},s}$ is $A_1 \dot\cup \cdots \dot\cup A_{t-s} \dot\cup B_{t-s+1} \dot\cup
  \cdots \dot\cup B_t$. Make $a_1,\dots,a_{t-s},b_{t-s+1},\dots,b_t$ an edge of $D_{\vec{\pi},s}$ if
  $a_i \in A_i$, $b_j \in B_j$, the codes for $a_1,\dots,a_{t-s}$ are all equal, and the code for
  $b_{t-s+j}$ is equal to the code formed by removing the $j$th bit of the code for $a_1$.

  For a general $\vec{\pi} = (k_1,\dots,k_t)$, start with $D_{(1,\dots,1),s}$ and expand  each
  vertex into the appropriate size; that is, a vertex in $A_i$ is expanded into $k_i$ vertices and
  each vertex in $B_j$ is expanded into $k_j$ vertices.  In $D_{\vec{\pi},s}$, each vertex in $A_i$
  is labeled by a pair $(c,z)$ where $c$ is a bit string of length $s$ and $z \in [k_i]$.  We call
  $z$ the \emph{expansion index} of the vertex.
\end{definition}

The hypergraph $D_{\vec{\pi},0}$ is a single edge and the hypergraph $D_{\vec{\pi},t-1}$ is by
definition the step $S_{\vec{\pi}}$.  The following lemma precisely formulates what we mean when we
say that $\tau_{\vec{\pi}}^{2^s}$ counts the hypergraph $D_{\vec{\pi},s}$.

\begin{lemma} \label{lem:powerscountpartialsteps}
  Let $H$ be a $k$-uniform hypergraph with loops, $\vec{\pi}$ a proper ordered partition of $k$ with
  $\vec{\pi} = (k_1,\dots,k_t)$, and let $0 \leq s \leq t-1$.  Let $W$ be the vector space over
  $\mathbb{R}$ of dimension $|V(H)|$ and let $\tau$ be the adjacency map of $H$.  Let
  $A_1,\dots,A_{t-s},B_{t-s+1},\dots,B_t$ be the vertex sets in the definition of $D_{\vec{\pi},s}$
  and let $\Delta$ be any map $A_1 \cup \dots \cup A_{t-s} \rightarrow V(H)$.  Then
  $\tau_{\vec{\pi}}^s$ counts the number of labeled, possibly degenerate copies of $D_{\vec{\pi},s}$
  extending $\Delta$ as follows.  
  
  Let $a_{i,1},\dots,a_{i,k_i2^s}$ be the vertices of $A_i$ ordered first lexicographically by bit
  code and then for equal codes ordered by expansion index.  Let $\chi_i$ be the indicator tensor in
  $W^{\otimes k_i 2^s}$ for the vertex tuple $(\Delta(a_{i,1}),\dots,\Delta(a_{i,k_i2^s}))$.  Then
  $\tau_{\vec{\pi}}^{2^s}(\chi_1,\dots,\chi_{t-s})$ is the number of edge-preserving maps
  $V(D_{\vec{\pi},s}) \rightarrow V(H)$ which are consistent with $\Delta$.
\end{lemma}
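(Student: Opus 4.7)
The plan is to prove the lemma by induction on $s$. For the base case $s=0$, the hypergraph $D_{\vec{\pi},0}$ is a single edge whose $A_i$ part consists of $k_i$ vertices (each labeled by the empty bit code), with no $B$ parts. The map $\tau_{\vec{\pi}}^{2^0}=\tau_{\vec{\pi}}$ evaluated on the product of indicator tensors of the tuples $(\Delta(a_{i,1}),\dots,\Delta(a_{i,k_i}))$ reduces, by the definition of $\tau_{\vec{\pi}}$ extended linearly from tensor-basis vectors, to $\tau(\Delta(a_{1,1}),\dots,\Delta(a_{t,k_t}))$, which is $1$ if the corresponding multiset is an edge of $H$ and $0$ otherwise. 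This matches the count of edge-preserving maps from $D_{\vec{\pi},0}$ extending $\Delta$.

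For the inductive step, I would first establish a structural decomposition of $D_{\vec{\pi},s}$. Split each $A_i$ into halves $A_i^0$ and $A_i^1$ according to the first bit of the length-$s$ code, and similarly split each $B_{t-s+j}$ for $j\geq 2$ (but not $B_{t-s+1}$, whose codes come from deleting the first $A$-bit and therefore range over all length-$(s-1)$ strings regardless of the first $A$-bit). The edges of $D_{\vec{\pi},s}$ then partition into two halves: the half whose $A$-code starts with $0$ is supported on $A_1^0\cup\dots\cup A_{t-s}^0\cup B_{t-s+1}\cup B_{t-s+2}^0\cup\dots\cup B_t^0$ and is isomorphic to $D_{\vec{\pi},s-1}$ via the code translation $0c\mapsto c$, with the shared part $B_{t-s+1}$ playing the role of $A'_{t-s+1}$ of $D_{\vec{\pi},s-1}$; the $1$-half is symmetric; and the two halves share only $B_{t-s+1}$ with the identity identification.

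The induction is then carried out by invoking $\tau_{\vec{\pi}}^{2^s}=\tau_{\vec{\pi}}^{2^{s-1}}\ast\tau_{\vec{\pi}}^{2^{s-1}}$. Because the vertices of $A_i$ are ordered first lexicographically by code, the first $k_i 2^{s-1}$ of them are precisely those in $A_i^0$, so the indicator tensor $\chi_i\in W^{\otimes k_i 2^s}$ factors as $u_i\otimes v_i$ with $u_i,v_i\in W^{\otimes k_i 2^{s-1}}$ the indicator tensors of $\Delta(A_i^0)$ and $\Delta(A_i^1)$ respectively. Choosing as orthonormal basis of $W^{\otimes k_{t-s+1}2^{s-1}}$ the standard tensor basis, whose elements $b$ are indicator tensors of ordered vertex tuples (to be interpreted as candidate images of $B_{t-s+1}$), the $\ast$-product yields
\begin{equation*}
\tau_{\vec{\pi}}^{2^s}(\chi_1,\dots,\chi_{t-s})=\sum_b \tau_{\vec{\pi}}^{2^{s-1}}(u_1,\dots,u_{t-s},b)\cdot\tau_{\vec{\pi}}^{2^{s-1}}(v_1,\dots,v_{t-s},b).
\end{equation*}
By the inductive hypothesis each factor counts edge-preserving maps of the respective copy of $D_{\vec{\pi},s-1}$ into $H$ extending $\Delta$ restricted to the appropriate half and augmented by $b$ on the shared $A'_{t-s+1}$ part. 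Since the other $B$-parts are disjoint between the two halves and are summed over independently within each inductive count, pairs of such extensions correspond bijectively to edge-preserving maps of $D_{\vec{\pi},s}$ extending $\Delta$ with $B_{t-s+1}$ sent to the tuple encoded by $b$; summing over $b$ gives the full count.

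The main obstacle is the bookkeeping: one must verify that the lexicographic-then-expansion-index ordering on the $A_i$ of $D_{\vec{\pi},s}$ restricts to the corresponding ordering on the $A'_i$ of $D_{\vec{\pi},s-1}$ under the code-translation isomorphism, so that the tensor factorization $\chi_i=u_i\otimes v_i$ is precisely aligned with the structural decomposition, and likewise that the shared part $B_{t-s+1}$ inherits matching orderings from both copies. For general $\vec{\pi}$ with $k_i>1$ this additionally requires that within each fixed bit code the expansion indices are ordered consistently on the two sides, which follows because within a single code the ordering is preserved unchanged under the isomorphism.
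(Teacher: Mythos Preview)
Your proposal is correct and follows essentially the same approach as the paper. The paper phrases the inductive step as going from $s$ to $s+1$ and packages the decomposition via two explicit embeddings $\Gamma_0,\Gamma_1 : V(D_{\vec{\pi},s}) \to V(D_{\vec{\pi},s+1})$ (prepending a $0$ or $1$ to each code, with the last $A$-block mapping unchanged into the new $\hat B_{t-s}$), whereas you describe the same splitting directly as $D_{\vec{\pi},s}$ breaking into two copies of $D_{\vec{\pi},s-1}$ glued along $B_{t-s+1}$; the algebra with $\tau_{\vec{\pi}}^{2^s}=\tau_{\vec{\pi}}^{2^{s-1}}\ast\tau_{\vec{\pi}}^{2^{s-1}}$ and the tensor factorization $\chi_i=u_i\otimes v_i$ is identical in both.
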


\begin{proof} 
By induction on $s$.  The base case is $s = 0$, where $D_{\vec{\pi},0}$ is a single edge, there are
no $B$-type sets, and thus $\Delta$ is a map $V(D_{\vec{\pi},0}) \rightarrow V(H)$.  The number of
edge preserving maps extending $\Delta$ is either zero or one depending on if the image of $\Delta$
is an edge of $H$ or not.  But $\tau_{\vec{\pi}}(\chi_1,\dots,\chi_t)$ equals zero or one depending
on if the vertices defining the indicator tensors $\chi_i$ form an edge, exactly what is required.

Assume the lemma is true for $s$; we will prove it for $s+1$.  Denote by $\hat{A}_1, \dots,
\hat{A}_{t-s-1}, \hat{B}_{t-s}, \linebreak[1] \dots, \hat{B}_t$ the sets in the definition of
$D_{\vec{\pi},s+1}$ and $A_1,\dots,A_{t-s},B_{t-s+1},\dots,B_t$ the sets in the definition of
$D_{\vec{\pi},s}$.  Let $\hat{\Delta}$ be a map $\hat{A}_1 \cup \dots \cup \hat{A}_{t-s-1}
\rightarrow V(H)$ and let $\hat{\chi_1},\dots,\hat{\chi}_{t-s-1}$ be the indicator tensors for the
image of $\hat{\Delta}$ ordered as in the statement of the lemma.  Since $\hat{\chi}_i$ is an
indicator tensor in $W^{\otimes k_i2^{s+1}}$, it is a simple tensor so $\hat{\chi}_i = \chi_i
\otimes \chi'_i$ for $\chi_i, \chi'_i \in W^{\otimes k_i 2^s}$.  Note that $\chi_i$ is the indicator
tensor for the image under $\hat{\Delta}$ of the vertices of $D_{\vec{\pi},s+1}$ whose code starts
with zero and $\chi'_i$ is the indicator tensor for the image under $\hat{\Delta}$ of the vertices
whose code starts with a one, since the definition of $\hat{\chi}_i$ sorted the vertices in the
image lexicographically.

\begin{figure}[ht] 
\begin{center}
\subfloat[$D_{\vec{\pi},s+1}$]{%
\begin{tikzpicture}[yscale=0.5,rounded corners=10pt,xscale=1.3]
  \foreach \x in {0,1}{%
    \node (a0\x) at (\x,0) [vertex] {};
    \node (a1\x) at (\x,1) [vertex] {};
    \node (a2\x) at (\x,2) [vertex] {};
    \node (a3\x) at (\x,3) [vertex] {};
  }
  \node at (0,-1) {$\hat{A}_1$};
  \node at (1,-1) {$\dots \hat{A}_{t-s-1}$};
  \node at (2.2,-1) {$\hat{B}_{t-s}$};
  \draw (3,0.5) -- (3,2.5) -- (5,2.5) -- (5,0.5) -- cycle;
  \node at (4,1.5) {$B$s};
  \draw (a00) -- (1,0) -- (2,0) -- (3.5,0.9);
  \draw (a10) -- (1,1) -- (2,2.5) -- (3.5,1.3);
  \draw (a20) -- (1,2) -- (2,5) -- (3.5,1.7);
  \draw (a30) -- (1,3) -- (2,7.5) -- (3.5,2.2);

  \node at (2,0) [vertex] {};
  \node at (2,2.5) [vertex] {};
  \node at (2,5) [vertex] {};
  \node at (2,7.5) [vertex] {};

  \begin{scope}[yshift=5cm,dashed]
    \foreach \x in {0,1}{%
      \node (a0\x) at (\x,0) [vertex] {};
      \node (a1\x) at (\x,1) [vertex] {};
      \node (a2\x) at (\x,2) [vertex] {};
      \node (a3\x) at (\x,3) [vertex] {};
    }
    \draw (3,0.5) -- (3,2.5) -- (5,2.5) -- (5,0.5) -- cycle;
    \node at (4,1.5) {$B$s};
    \draw (a00) -- (1,0) -- (2,-5.2) -- (3.5,0.9);
    \draw (a10) -- (1,1) -- (2,-2.7) -- (3.5,1.3);
    \draw (a20) -- (1,2) -- (2,-.1) -- (3.5,1.7);
    \draw (a30) -- (1,3) -- (2,2.5) -- (3.5,2.2);
  \end{scope}
\end{tikzpicture}
} \quad
\subfloat[$D_{\vec{\pi},s}$]{%
\begin{tikzpicture}[rounded corners=10pt,xscale=1.3]
  \foreach \x in {0,1,2}{%
    \node (a0\x) at (\x,0) [vertex] {};
    \node (a1\x) at (\x,1) [vertex] {};
    \node (a2\x) at (\x,2) [vertex] {};
    \node (a3\x) at (\x,3) [vertex] {};
  }
  \node at (0,-1) {$A_1$};
  \node at (1,-1) {$\dots A_{t-s-1}$};
  \node at (2.2,-1) {$A_{t-s}$};
  \draw (3,0.5) -- (3,2.5) -- (5,2.5) -- (5,0.5) -- cycle;
  \node at (4,1.5) {$B$s};
  \draw (a00) -- (a01) -- (2,0) -- (3.5,0.9);
  \draw (a10) -- (a11) -- (2,1) -- (3.5,1.3);
  \draw (a20) -- (a21) -- (2,2) -- (3.5,1.7);
  \draw (a30) -- (a31) -- (2,3) -- (3.5,2.2);
\end{tikzpicture}
}

\begin{align} \label{eq:powerscountwalks}
  \tau_{\vec{\pi}}^{2^{s+1}}(\chi_1 \otimes \chi'_1, \dots, \chi_{t-s-1} \otimes \chi'_{t-s-1}) =
  \sum_{j=1}^{d} \tau_{\vec{\pi}}^{2^s}(\chi_1,\dots,\chi_{t-s-1},w_j)
  \tau_{\vec{\pi}}^{2^s}(\chi'_1,\dots,\chi'_{t-s-1},w_j)
\end{align}

\caption{The induction step of Lemma~\ref{lem:powerscountpartialsteps}}
\label{fig:powerscountwalks}
\end{center}
\end{figure} 

Consider the expansion of the definition of
$\tau_{\vec{\pi}}^{2^{s+1}}(\hat{\chi}_1,\dots,\hat{\chi}_{t-s-1})$ shown in
\eqref{eq:powerscountwalks} in Figure~\ref{fig:powerscountwalks}; the tensors $\hat{\chi}_i$ are
split into $\chi_i$ and $\chi'_i$ and we sum over the standard basis $\{w_1,\dots,w_d\}$ of
$W^{\otimes k_{t-s} 2^s}$, where $d = \dim(W^{\otimes k_{t-s} 2^s})$. We can consider the tensor
$w_j$ in \eqref{eq:powerscountwalks} to be the indicator tensor of a tuple of $k_{t-s}2^s$ vertices.

\begin{definition}
We now describe two embeddings of $D_{\vec{\pi},s}$ into $D_{\vec{\pi},s+1}$.
In Figure~\ref{fig:powerscountwalks}~\textit{(a)}, these two embeddings are the dotted and solid
lines.  Let $\Gamma_0 : V(D_{\vec{\pi},s}) \rightarrow V(D_{\vec{\pi},s+1})$ be the following injection.  For $1
\leq i \leq t -s - 1$ and $a \in A_i$, set $\Gamma_0(a)$ equal to the vertex in $\hat{A}_i$ whose
code equals the code for $a$ with a zero prepended to the code and the same expansion index.  That
is, a vertex in $A_i$ with label $(1011,4)$ is mapped to the vertex in
$\hat{A}_i$ with label $(01011,4)$.  For $a \in A_{t-s}$, set $\Gamma_0(a)$ equal
to the vertex in $\hat{B}_{t-s}$ which has the same label as $a$.  For $t-s+1
\leq j \leq t$ and $b \in B_j$, set $\Gamma_0(b)$ equal to the vertex in $\hat{B}_j$ whose code
equals the code for $b$ with a zero prepended to the code and the same expansion index.  In other
words, $\Gamma_0$ adds a zero to the front of the codes except for vertices in $A_{t-s}$ whose code
does not change.  Define $\Gamma_1 : V(D_{\vec{\pi},s}) \rightarrow V(D_{\vec{\pi},s+1})$ similarly except
prepend a one instead of a zero.  In Figure~\ref{fig:powerscountwalks} \textit{(a)}, the dotted
lines represent $\Gamma_0$ and the solid lines represent $\Gamma_1$.
\end{definition}

\noindent\textit{Claim:} $\Gamma_0$ and $\Gamma_1$ are edge preserving injections and every edge in
$D_{\vec{\pi},s+1}$ is in the image of $\Gamma_0$ or $\Gamma_1$ but not both.

\medskip

\begin{proof}[Proof of Claim] 
Let $E$ be an edge in $D_{\vec{\pi},s}$.  For $1 \leq i \leq j \leq t-s-1$ and $a_i \in A_i \cap E$
and $a_j \in A_j \cap E$, since $E$ is an edge of $D_{\vec{\pi},s}$ the code for $a_i$ equals the
code for $a_j$.  This implies that the codes for $\Gamma_0(a_i)$ and $\Gamma_0(a_j)$ are equal since both
had a zero prepended.  Now consider $b \in A_{t-s} \cap E$ which is mapped to $\hat{B}_{t-s}$.  The
conditions for $\Gamma_0(E)$ an edge of $D_{\vec{\pi},s+1}$ requires that the code for $\Gamma_0(b)$
equals the code formed by deleting the first bit of $\Gamma_0(a)$ where $a \in A_1 \cap E$.  But the
code for $a$ equals the code for $b$ since both are in $A$-type sets in $D_{\vec{\pi},s}$ and the
map $\Gamma_0$ adds a zero to the front of the code for $a$ and leaves the code for $b$ alone.  Thus
the code for $\Gamma_0(b)$ equals the code formed by deleting the first bit of $\Gamma_0(a)$.
Lastly, consider $b \in B_j \cap E$ for $t-s+1 \leq j \leq t$ and consider deleting the $(j+1)$-th
bit of the code for $\Gamma_0(a)$.  This is the same as deleting the $j$-th bit of $a$ since
$\Gamma_0(a)$ had a zero prepended.  But deleting the $j$th bit of $a$ equals the code for $b$,
since $a,b \in E \in E(D_{\vec{\pi},s})$.  Thus deleting the $(j+1)$th bit of $\Gamma_0(a)$ is the
code for $\Gamma_0(b)$.  We have now checked all the conditions, so $\Gamma_0(E)$ is an edge of
$D_{\vec{\pi},s+1}$, i.e.\ $\Gamma_0$ is edge preserving.  $\Gamma_1$ is edge preserving by the same
argument.  Finally, let $E$ be an edge of $D_{\vec{\pi},s+1}$ and pick $a \in E \cap \hat{A}_1$.  If
the first bit of the code for $a$ equals zero, then $E$ is in the image of $\Gamma_0$ and if the
first bit of the code for $a$ equals one, then $E$ is in the image of $\Gamma_1$.  This concludes
the proof of the claim.
\end{proof} 

This claim implies that any edge-preserving map extending $\hat{\Delta}$ is formed from two edge
preserving maps $V(D_{\vec{\pi},s}) \rightarrow V(H)$ each extending the appropriate restriction of
$\hat{\Delta}$.  Start with $\hat{\Delta}$ and extend arbitrarily to a map $\Lambda : \hat{A}_1 \cup
\dots \cup \hat{A}_{t-s-1} \cup \hat{B}_{t-s} \rightarrow V(H)$.  Next define $\Lambda_0$ and
$\Lambda_1$ as maps $A_1 \cup \dots \cup A_{t-s} \rightarrow V(H)$ such that $\Lambda_0 = \Lambda
\circ \Gamma_0|_{\bar{A}}$ and $\Lambda_1 = \Lambda \circ \Gamma_1|_{\bar{A}}$, where $\bar{A} = A_1
\cup \dots \cup A_{t-s}$ so $\Gamma_0|_{\bar{A}}$ is the map $\Gamma_0$ restricted to the $A$-type
sets in $D_{\vec{\pi},s}$.  By the claim, the number of edge-preserving maps extending $\hat{\Delta}$
equals the sum over $\Lambda$ of the product of the number of edge-preserving maps extending
$\Lambda_0$ and extending $\Lambda_1$.  This is because any edge preserving map extending $\Lambda$
can be composed with $\Gamma_0$ and $\Gamma_1$ to create edge preserving maps extending $\Lambda_0$
and $\Lambda_1$, and since $\Gamma_0$ and $\Gamma_1$ are injections covering all edges of
$D_{\vec{\pi},s+1}$, this can be reversed.  The last step in the proof is to show that this is
exactly what \eqref{eq:powerscountwalks} counts.

Let $\hat{b}_1,\dots,\hat{b}_{k_{t-s}2^s}$ be the vertices of $\hat{B}_{t-s}$ listed first in
lexicographic order of codes and then by expansion index.  Let $w$ be the indicator tensor in
$W^{\otimes k_{t-s}2^s}$ for the vertex tuple $(\Lambda(\hat{b}_1), \dots,
\Lambda(\hat{b}_{k_{t-s}2^s}))$.  Note that as $\Lambda$ ranges over all possible extensions of
$\hat{\Delta}$, $w$ ranges over the standard basis of $W^{\otimes k_{t-s}2^s}$.  Now $\chi_1, \dots,
\chi_{t-s-1},w$ are the indicator tensors representing the image of the map $\Lambda_0$, since as
mentioned above, $\chi_1,\dots,\chi_{t-s-1}$ are the indicator tensors for the image under
$\hat{\Delta}$ of the vertices whose code stars with a zero.  Similarly,
$\chi'_1,\dots,\chi'_{t-s-1},w$ are the indicator tensors representing the image of the map
$\Lambda_1$.  Thus by induction, $\tau_{\vec{\pi}}^{2^s}(\chi_1,\dots,\chi_{t-s-1},w)$ is the number
of edge-preserving maps extending $\Lambda_0$ and
$\tau_{\vec{\pi}}^{2^s}(\chi'_1,\dots,\chi'_{t-s-1},w)$ is the number of edge preserving maps
extending $\Lambda_1$.  By the claim, this implies that the product
\begin{align*}
  \tau_{\vec{\pi}}^{2^s}(\chi_1,\dots,\chi_{t-s-1},w)
  \tau_{\vec{\pi}}^{2^s}(\chi'_1,\dots,\chi'_{t-s-1},w)
\end{align*}
counts the number of edge-preserving maps extending $\Lambda$.  Thus
\eqref{eq:powerscountwalks} sums over the choices for $\Lambda$ extending $\hat{\Delta}$ of
the number of edge-preserving maps extending $\Lambda$.  This sum is exactly the number of
edge-preserving maps extending $\hat{\Delta}$, so the proof is complete.
\end{proof} 

\begin{cor} \label{cor:powerscountwalks}
  Let $H$ be a $k$-uniform hypergraph with loops, $\vec{\pi}$ a proper ordered partition of $k$ with
  $\vec{\pi} = (k_1,\dots,k_t)$, and let $\ell \geq 2$ be an integer.  Let $W$ be the vector space
  over $\mathbb{R}$ of dimension $|V(H)|$ and let $\tau$ be the adjacency map of $H$.  Let
  $a_1,\dots,a_{k_1 2^{t-2}},a'_1,\dots,a'_{k_1 2^{t-2}}$ be (not necessarily distinct) vertices of
  $H$ and let $\xi$ and $\xi'$ be the indicator tensors in $W^{k_12^{t-2}}$ for the tuples
  $(a_1,\dots,a_{k_12^{t-2}})$ and $(a'_1,\dots,a'_{k_12^{t-2}})$ respectively.  Then
  $A[\tau_{\vec{\pi}}^{2^{t-1}}](\xi,\xi')$ is the number of labeled, possibly degenerate steps of
  type $\vec{\pi}$ in $H$ with attach tuples $(a_1,\dots,a_{k_12^{t-2}})$ and
  $(a'_1,\dots,a'_{k_12^{t-2}})$.  Also, $A[\tau_{\vec{\pi}}^{2^{t-1}}]^{\ell}(\xi,\xi')$ is the
  number of labeled walks of length $2\ell$ and type $\vec{\pi}$ with attach tuples
  $(a_1,\dots,a_{k_12^{t-2}})$ and $(a'_1,\dots,a'_{k_12^{t-2}})$.
\end{cor}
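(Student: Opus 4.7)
The plan is to reduce the first assertion directly to Lemma \ref{lem:powerscountpartialsteps} at $s=t-1$, and then derive the second assertion by the standard matrix-power identity for bilinear maps. For the first part, observe that $\xi$ and $\xi'$, being indicator tensors of tuples, are simple (rank-one) tensors in $W^{\otimes k_1 2^{t-2}}$. Factor
\[
\xi = u_1 \otimes u_2 \otimes \cdots \otimes u_{2^{t-2}}, \qquad \xi' = v_1 \otimes v_2 \otimes \cdots \otimes v_{2^{t-2}},
\]
where $u_j \in W^{\otimes k_1}$ is the indicator tensor of $(a_{(j-1)k_1+1}, \ldots, a_{jk_1})$ and $v_j$ is defined analogously from the $a'_i$. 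By the very definition of $A[\,\cdot\,]$,
\[
A[\tau_{\vec{\pi}}^{2^{t-1}}](\xi,\xi') = \tau_{\vec{\pi}}^{2^{t-1}}(u_1 \otimes v_1 \otimes u_2 \otimes v_2 \otimes \cdots \otimes u_{2^{t-2}} \otimes v_{2^{t-2}}),
\]
and the right-hand side is $\tau_{\vec{\pi}}^{2^{t-1}}$ evaluated on the indicator tensor of a concrete ordered tuple of $k_1 2^{t-1}$ vertices.

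The crucial compatibility step is to match this interleaved tuple with the lexicographic ordering of $A_1 \subset V(D_{\vec{\pi},t-1}) = V(S_{\vec{\pi}})$ used in Lemma \ref{lem:powerscountpartialsteps}. The point is that when the $(t-1)$-bit codes are listed in lex order, the odd positions are exactly the codes ending in $0$ and the even positions are exactly the codes ending in $1$, with each sublist still in lex order on the first $t-2$ bits. Hence the $k_1$-block structure of the interleaved tensor places $u_1, u_2, \ldots, u_{2^{t-2}}$ on the vertices of $A^{(0)}$ (with their expansion indices) in lex order, and $v_1, \ldots, v_{2^{t-2}}$ on $A^{(1)}$ in lex order. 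Therefore the interleaved tensor is the indicator tensor of the map $\Delta: A_1 \to V(H)$ sending (the expansions of) $A^{(0)}$ to $(a_1,\dots,a_{k_1 2^{t-2}})$ and $A^{(1)}$ to $(a'_1,\dots,a'_{k_1 2^{t-2}})$. Lemma \ref{lem:powerscountpartialsteps} with $s=t-1$ then yields precisely the count of labeled, possibly degenerate edge-preserving maps $V(S_{\vec{\pi}}) \to V(H)$ extending $\Delta$, which is the number of labeled possibly degenerate steps with the prescribed attach tuples.

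For the second assertion, since $A[\tau_{\vec{\pi}}^{2^{t-1}}]$ is a bilinear map on $W^{\otimes k_1 2^{t-2}}$, expanding powers in the standard basis gives
\[
A[\tau_{\vec{\pi}}^{2^{t-1}}]^{\ell}(\xi,\xi') = \sum_{\eta_1,\dots,\eta_{\ell-1}} A[\tau_{\vec{\pi}}^{2^{t-1}}](\xi,\eta_1) \cdots A[\tau_{\vec{\pi}}^{2^{t-1}}](\eta_{\ell-1},\xi'),
\]
where each $\eta_i$ runs over the standard basis, i.e.\ over indicator tensors of tuples of $k_1 2^{t-2}$ vertices of $H$. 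By the first assertion, each factor counts labeled possibly degenerate steps of type $\vec{\pi}$ with the indicated pair of attach tuples; since a walk of type $\vec{\pi}$ of length $2\ell$ is precisely a concatenation of $\ell$ steps glued along matching attach tuples $\xi, \eta_1, \ldots, \eta_{\ell-1}, \xi'$, the sum enumerates exactly the labeled walks of length $2\ell$ between the prescribed endpoint attach tuples. The main obstacle is the bookkeeping in the second paragraph: one must carefully reconcile the interleaving convention of $A[\,\cdot\,]$ (odd/even positions) with the two different orderings present in the step definition (lex by full code for $A_1$, versus lex-within-$A^{(0)}$ and lex-within-$A^{(1)}$ for the attach tuples), and verify that expansion indices are handled consistently in both conventions.
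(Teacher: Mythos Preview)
Your proposal is correct and follows essentially the same approach as the paper: reduce the $\ell=1$ case to Lemma~\ref{lem:powerscountpartialsteps} at $s=t-1$ by checking that the interleaving convention in the definition of $A[\,\cdot\,]$ matches the lexicographic ordering of $A_1$ (which alternates between codes ending in $0$ and codes ending in $1$, in $k_1$-blocks), and then handle general $\ell$ via matrix powers. The only cosmetic difference is that the paper does the second part by induction on $\ell$ while you write out the full expansion of $A^\ell$ at once; these are the same argument.
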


\begin{proof} 
The proof is by induction on $\ell$.  First, consider the base case of $\ell = 1$, where the path of
length two and type ${\vec{\pi}}$ is the step of type ${\vec{\pi}}$.  Let $A$ be the vertex set from
the definition of the step $S_{\vec{\pi}}$.  Define a mapping $\Delta : A \rightarrow V(H)$ by
mapping the attach tuples of $S_{\vec{\pi}}$ to the tuples $(a_1,\dots,a_{k_12^{t-2}})$ and
$(a'_1,\dots,a'_{k_12^{t-2}})$ in $V(H)$.  By definition, the first attach tuple of $S_{\vec{\pi}}$
is the vertices ending with a zero and listed in lexicographic order and the second attach tuple of
$S_{\vec{\pi}}$ is the vertices ending with a one and listed in lexicographic order.  This implies
that the indicator tensor $\chi_1$ from the statement of Lemma~\ref{lem:powerscountpartialsteps} is
the indicator tensor in $W^{\otimes k_12^{t-1}}$ for the tuple $(a_1, \dots, a_{k_1}, a'_1, \dots,
a'_{k_1}, a_{k_1+1}, \dots, a_{2k_1}, a'_{k_1+1}, \dots, a'_{2k_1}, \dots, a_{k_12^{t-3}+ 1}, \dots,
a_{k_12^{t-2}}, \linebreak[1] a'_{k_12^{t-3}+1}, \dots, a'_{k_12^{t-2}})$, since each attach tuple
is in lexicographic order but the last bit is zero or one so the full ordering alternates between
attach tuples.  By the definition of $A[\tau_{\vec{\pi}}^{2^{t-1}}]$ and the indicator tensors
$\xi,\xi',\chi_1$, $A[\tau_{\vec{\pi}}^{2^{t-1}}](\xi,\xi') = \tau_{\vec{\pi}}^{2^{t-1}}(\chi_1)$.
Thus Lemma~\ref{lem:powerscountpartialsteps} applied with $s = t-1$ shows that the number of
edge-preserving maps extending $\Delta$ is $A[\tau_{\vec{\pi}}^{2^{t-1}}](\xi,\xi')$, but by the
definition of $\Delta$, this is exactly the number of labeled, possibly degenerate steps of type
${\vec{\pi}}$ with attach tuples $(a_1,\dots,a_{k_12^{t-2}})$ and $(a'_1,\dots,a'_{k_12^{t-2}})$.

Next assume that the corollary is true for $\ell$; we will show that it is true for $\ell + 1$.
Using the definition of matrix multiplication, let $\{d_1,\dots,d_{\dim(W^{\otimes k_1 2^{t-2}})}
\}$ be the standard basis of $W^{\otimes k_1 2^{t-2}}$ so
\begin{align} \label{eq:powerofAincountingcycles}
  A[\tau_{\vec{\pi}}^{2^{t-1}}]^{\ell+1}(\xi,\xi') = \sum_{i=1}^{\dim(W^{\otimes k_1 2^{t-2}})}
  A[\tau_{\vec{\pi}}^{2^{t-1}}]^\ell(\xi,d_i) A[\tau_{\vec{\pi}}^{2^{t-1}}](d_i,\xi').
\end{align}
Each standard basis vector $d_i$ can be thought of as a $k_1 2^{t-2}$-tuple of vertices which
corresponds to one of the two attach tuples.  Thus \eqref{eq:powerofAincountingcycles} sums over the
internal attach tuple for a walk of length $2\ell$ and $S_{\vec{\pi}}$.
\end{proof} 

\begin{proof}[Proof of Proposition~\ref{prop:powerscountcycles}] 
Since $A[\tau_{\vec{\pi}}^{2^{t-1}}]^\ell$ counts the number of walks of length $2\ell$, the trace
counts circuits.  If $\{d_1,\dots,d_{\dim(W^{\otimes k_1 2^{t-2}})} \}$ is any orthonormal basis of
$W^{\otimes k_1 2^{t-2}}$, the trace of the matrix $A[\tau_{\vec{\pi}}^{2^{t-1}}]^\ell$ is
\begin{align*}
  \tr{A[\tau_{\vec{\pi}}^{2^{t-1}}]^\ell} = \sum_{i=1}^{\dim(W^{\otimes k_1 2^{t-2}})}
  A[\tau_{\vec{\pi}}^{2^{t-1}}]^\ell(d_i, d_i).
\end{align*}
If $\{d_1,\dots,d_{\dim(W^{\otimes k_1 2^{t-2}})} \}$ is the standard basis, each $d_i$ corresponds
to a tuple of $k_12^{t-2}$ vertices, so the above expression is the number of walks of type
${\vec{\pi}}$ with both attach tuples equal to $d_i$.
\end{proof} 

\subsection{Bounding eigenvalues from cycle counts}
\label{sub:c4toeigfinalproof}

This section contains the proof that \propcycle[4\ell]{$\pi$} $\Rightarrow$ \propeig{$\pi$} for
$d$-coregular hypergraphs with loops.  First, we require a few simple algebraic facts of multilinear
maps.

\begin{lemma} \label{lem:singlepowerupperbound}
  Let $t \geq 2$, let $V_1, \dots, V_t$ be finite dimensional vector spaces over $\mathbb{R}$, let
  $\phi : V_1 \times \dots \times V_t \rightarrow \mathbb{R}$ be a $t$-linear map and let $x_1 \in
  V_1,\dots,x_t \in V_t$ be unit length vectors. Then
  \begin{align*}
    \left| \phi(x_1,\dots,x_t) \right|^2 \leq \left| \phi^2(x_1 \otimes x_1, \dots, x_{t-1}
    \otimes x_{t-1}) \right|.
  \end{align*}
\end{lemma}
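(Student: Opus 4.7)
The plan is to unwind the definition of $\phi^2 = \phi \ast \phi$ applied to the simple tensors $x_i \otimes x_i$, and then recognize the resulting inequality as a straightforward Cauchy--Schwarz estimate.

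First, I would fix an orthonormal basis $b_1, \ldots, b_{\dim(V_t)}$ of $V_t$ and apply the definition of the product $\phi \ast \phi$ directly to the input $(x_1 \otimes x_1, \ldots, x_{t-1} \otimes x_{t-1})$. This gives
\begin{align*}
  \phi^2(x_1 \otimes x_1, \ldots, x_{t-1} \otimes x_{t-1}) = \sum_{j=1}^{\dim(V_t)} \phi(x_1, \ldots, x_{t-1}, b_j)^2,
\end{align*}
which is a sum of squares, hence nonnegative; in particular the absolute value signs on the right-hand side of the statement can be removed.

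Second, I would expand $x_t$ in the chosen basis as $x_t = \sum_j \langle x_t, b_j \rangle b_j$, where $\sum_j \langle x_t, b_j \rangle^2 = \|x_t\|^2 = 1$. Using linearity of $\phi$ in its last coordinate, I get $\phi(x_1, \ldots, x_{t-1}, x_t) = \sum_j \langle x_t, b_j \rangle \phi(x_1, \ldots, x_{t-1}, b_j)$.

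Third, applying the Cauchy--Schwarz inequality to this sum yields
\begin{align*}
  |\phi(x_1, \ldots, x_t)|^2 \leq \Big(\sum_j \langle x_t, b_j \rangle^2\Big)\Big(\sum_j \phi(x_1, \ldots, x_{t-1}, b_j)^2\Big) = \sum_j \phi(x_1, \ldots, x_{t-1}, b_j)^2,
\end{align*}
where the last equality uses $\|x_t\| = 1$. Combined with the first step, this is exactly the claimed bound. There is no real obstacle here; the only thing to be slightly careful about is checking that the definition of $\phi^2$ is consistent with choosing the same basis $\{b_j\}$ used in the Cauchy--Schwarz expansion, which is guaranteed by the basis-independence of $\phi \ast \phi$ already established in Section~\ref{sub:c4toeigmultiproduct}.
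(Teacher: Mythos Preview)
Your proof is correct and is essentially the same as the paper's: both expand $\phi^2$ on the simple tensors to obtain $\sum_j \phi(x_1,\dots,x_{t-1},b_j)^2$ and then bound $|\phi(x_1,\dots,x_t)|^2$ by this sum via Cauchy--Schwarz. The paper phrases the last step through a Riesz-representation vector $w$ with $\phi(x_1,\dots,x_{t-1},\cdot)=\langle w,\cdot\rangle$ and the inequality $|\langle w,x_t\rangle|\leq \|w\|$, which is exactly your Cauchy--Schwarz applied to the coordinate sequences.
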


\begin{proof} 
Consider the linear map $\phi(x_1, \dots, x_{t-1},\cdot)$ which is a linear map from $V_{t}$ to
$\mathbb{R}$.  There exists a vector $w \in V_t$ such that
$\phi(x_1,\dots,x_{t-1},\cdot) = \dotp{w}{\cdot}$. Then
\begin{align*}
  \phi^2(x_1 \otimes x_1, \dots, x_{t-1} \otimes x_{t-1})
  = \sum_j \left| \phi(x_1, \dots, x_{t-1}, b_j)\right|^2 
  = \sum_j \left| \dotp{w}{b_j} \right|^2 = \dotp{w}{w}
\end{align*}
where the last equality is because $\{b_j\}$ is an orthonormal basis of $V_{t}$. Since $\left\lVert
w \right\rVert = \sqrt{\dotp{w}{w}}$, $\left| \phi^2(x_1 \otimes x_1, \dots, x_{t-1} \otimes
x_{t-1})\right| = \left| \dotp{w}{w} \right| = \left| \dotp{w}{w/\left\lVert w \right\rVert}
\right|^2$.  But since $x_t$ is unit length and $\dotp{w}{\cdot}$ is maximized over the unit ball at
vectors parallel to $w$ (so maximized at $\pm w/\left\lVert w \right\rVert$), $\left|
\dotp{w}{w/\left\lVert w \right\rVert} \right| \geq \left| \dotp{w}{x_t} \right|$. Thus
\begin{align*}
  \left| \phi^2(x_1 \otimes x_1, \dots, x_{t-1} \otimes x_{t-1})\right| = \left|
  \dotp{w}{\frac{w}{\left\lVert w \right\rVert}} \right|^2 \geq \left| \dotp{w}{x_t} \right|^2
  = \left| \phi(x_1,\dots,x_t) \right|^2.
\end{align*}
The last equality used the definition of $w$, that $\phi(x_1,\dots,x_{t-1},\cdot) =
\dotp{w}{\cdot}$.
\end{proof} 

\begin{lemma} \label{lem:largesteigpower}
  Let $t \geq 2$, let $V_1,\dots,V_t$ be finite dimensional vector spaces over $\mathbb{R}$, and let
  $\phi : V_1 \times \dots \times V_t \rightarrow \mathbb{R}$ be a $t$-linear map. Then for any unit
  length $x_1 \in V_1, \dots, x_t \in V_t$, we have
  \begin{align} \label{eq:largesteigpower}
    \left| \phi(x_1,\dots,x_t) \right|^{2^{t-1}} \leq \left| A[\phi^{2^{t-1}}](
    \underbrace{x_1 \otimes \dots \otimes x_1}_{2^{t-2}},
    \underbrace{x_1 \otimes \dots \otimes x_1}_{2^{t-2}}
    ) \right|.
  \end{align}
  Also,
  \begin{align} \label{eq:lambdaoneboundsspectralnorm}
    \left\lVert \phi \right\rVert^{2^{t-1}} \leq \lambda_1(A[\phi^{2^{t-1}}]).
  \end{align}
\end{lemma}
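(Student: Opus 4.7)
The plan is to prove inequality \eqref{eq:largesteigpower} by iterating Lemma~\ref{lem:singlepowerupperbound} a total of $t-1$ times, and then to obtain \eqref{eq:lambdaoneboundsspectralnorm} by taking suprema. First I would apply Lemma~\ref{lem:singlepowerupperbound} directly to $\phi$ and the unit vectors $x_1,\dots,x_t$ to get
\begin{align*}
  |\phi(x_1,\dots,x_t)|^{2} \leq |\phi^{2}(x_1 \otimes x_1,\dots,x_{t-1}\otimes x_{t-1})|.
\end{align*}
Since $\phi^{2}$ is $(t-1)$-linear and each $x_i \otimes x_i$ has unit length (because $\|x_i\|=1$ implies $\|x_i \otimes x_i\|=\|x_i\|^{2}=1$), the hypotheses of Lemma~\ref{lem:singlepowerupperbound} are again met and I can apply it to $\phi^{2}$, obtaining a bound in terms of $\phi^{4}(x_1^{\otimes 4},\dots,x_{t-2}^{\otimes 4})$.

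The main induction would show, by repeatedly squaring and applying Lemma~\ref{lem:singlepowerupperbound}, that for each $1 \leq s \leq t-1$,
\begin{align*}
  |\phi(x_1,\dots,x_t)|^{2^{s}} \leq \bigl|\phi^{2^{s}}(x_1^{\otimes 2^{s}}, \dots, x_{t-s}^{\otimes 2^{s}})\bigr|.
\end{align*}
The unit length hypothesis is preserved at every step because $\|x_i^{\otimes 2^{s}}\|=1$. Setting $s=t-1$ reduces to a map of arity one, giving $|\phi(x_1,\dots,x_t)|^{2^{t-1}} \leq |\phi^{2^{t-1}}(x_1^{\otimes 2^{t-1}})|$. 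Then by the definition of $A[\phi^{2^{t-1}}]$ (taking $u_i=v_i=x_1$, so that the interleaving $u_1\otimes v_1\otimes\cdots\otimes u_{2^{t-2}}\otimes v_{2^{t-2}}$ is just $x_1^{\otimes 2^{t-1}}$), I get exactly \eqref{eq:largesteigpower}.

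For \eqref{eq:lambdaoneboundsspectralnorm}, I would take the supremum of both sides of \eqref{eq:largesteigpower} over unit vectors $x_1,\dots,x_t$. The left-hand side becomes $\|\phi\|^{2^{t-1}}$ by definition of the spectral norm. The right-hand side is bounded above by the supremum of $|A[\phi^{2^{t-1}}](u,v)|$ over all unit vectors $u, v \in V_1^{\otimes 2^{t-2}}$; this supremum equals the spectral norm of the bilinear map $A[\phi^{2^{t-1}}]$, which, since $A[\phi^{2^{t-1}}]$ is a real symmetric matrix, equals its largest eigenvalue in absolute value, $\lambda_1(A[\phi^{2^{t-1}}])$.

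The main obstacle I anticipate is purely bookkeeping: tracking which coordinates get tensored at each iteration and verifying that the final tensor power lines up with the interleaved definition of $A[\phi^{2^{t-1}}]$. The step where the ``last'' variable $x_t$ is absorbed by Lemma~\ref{lem:singlepowerupperbound} is the key observation that allows the arity to drop from $t$ to $t-1$; after that, by symmetry of the construction, only $x_1$ survives in the input to the fully-reduced map, which is exactly what makes the diagonal substitution $u_i=v_i=x_1$ work.
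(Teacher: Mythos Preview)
Your proposal is correct and follows essentially the same approach as the paper: induct on $s$ via Lemma~\ref{lem:singlepowerupperbound} to obtain $|\phi(x_1,\dots,x_t)|^{2^{s}} \leq |\phi^{2^{s}}(x_1^{\otimes 2^{s}},\dots,x_{t-s}^{\otimes 2^{s}})|$, set $s=t-1$, and invoke the definition of $A[\phi^{2^{t-1}}]$ for \eqref{eq:largesteigpower}; then pass to the supremum (equivalently, pick maximizing unit vectors) and use that $x_1^{\otimes 2^{t-2}}$ is a unit vector together with the fact that $A[\phi^{2^{t-1}}]$ is real symmetric to get \eqref{eq:lambdaoneboundsspectralnorm}.
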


\begin{proof} 
By induction on $s$ we have that 
\begin{align*}
    \left| \phi(x_1,\dots,x_t) \right|^{2^s} \leq \left| \phi^{2^s}(
    \underbrace{x_1 \otimes \dots \otimes x_1}_{2^{s}}, \dots,
    \underbrace{x_{t-s} \otimes \dots \otimes x_{t-s}}_{2^{s}}
    ) \right|
\end{align*}
Indeed, the base case is $s = 0$ where both sides are equal and the induction step follows from the
previous lemma, since
\begin{align*}
  \Big(\left| \phi(x_1,\dots,x_t) \right|^{2^{s-1}}\Big)^2 &\leq \left| \phi^{2^{s-1}}(
    \underbrace{x_1 \otimes \dots \otimes x_1}_{2^{s-1}}, \dots,
    \underbrace{x_{t-s+1} \otimes \dots \otimes x_{t-s+1}}_{2^{s-1}}
    ) \right|^2 \\
    &\leq \left| \phi^{2^s}(
    \underbrace{x_1 \otimes \dots \otimes x_1}_{2^{s}}, \dots,
    \underbrace{x_{t-s} \otimes \dots \otimes x_{t-s}}_{2^{s}}
    ) \right|.
\end{align*}
By definition of $A[\phi^{2^{t-1}}]$, $| A[\phi^{2^{t-1}}]( x_1 \otimes \dots
\otimes x_1, x_1 \otimes \dots \otimes x_1) | = | \phi^{2^{t-1}}( x_1 \otimes \dots \otimes x_1) |$,
completing the proof of \eqref{eq:largesteigpower}.  Let $x_1, \dots, x_t$ as unit length
vectors maximizing $\phi$. Since $x_1 \otimes \dots \otimes x_1$ is unit length,
\eqref{eq:largesteigpower} proves that 
\begin{align*}
  \left\lVert \phi \right\rVert^{2^{t-1}} = \left| \phi(x_1,\dots,x_t) \right|^{2^{t-1}} \leq
  \left|A[\phi^{2^{t-1}}](x_1\otimes\dots\otimes x_1, x_1\otimes\dots\otimes x_1) \right|
  \leq \lambda_1(A[\phi^{2^{t-1}}]).
\end{align*}
\end{proof} 

\begin{cor} \label{cor:secondeig}
  Let $H$ be a $d$-coregular, $k$-uniform hypergraph with loops and let $\pi$ be any proper
  partition of $k$ with $t$ parts.  Then for any ordering $\vec{\pi}$ of $\pi$,
  \begin{align*}
    \lambda_{2,\pi}(H) \leq \Big( \lambda_2(A[\tau_{\vec{\pi}}^{2^{t-1}}]) \Big)^{2^{-t+1}}.
  \end{align*}
\end{cor}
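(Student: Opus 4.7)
Let $\sigma := \tau_{\vec{\pi}} - (d/n)\, J_{\vec{\pi}}$; since $k!|E(H)|/n^k = d/n$ for a $d$-coregular hypergraph with loops, $\lambda_{2,\pi}(H) = \|\sigma\|$. Applying Lemma~\ref{lem:largesteigpower} to $\sigma$ yields $\lambda_{2,\pi}(H)^{2^{t-1}} \leq \lambda_1(A[\sigma^{2^{t-1}}])$, so the corollary reduces to showing $\lambda_1(A[\sigma^{2^{t-1}}]) \leq \lambda_2(A[\tau_{\vec{\pi}}^{2^{t-1}}])$.

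The key algebraic identity is that for $d$-coregular $H$ and every $i$,
\begin{align*}
\tau_{\vec{\pi}}(x_1, \ldots, \mathbf{1}^{\otimes k_i}, \ldots, x_t) = (d/n)\, J_{\vec{\pi}}(x_1, \ldots, \mathbf{1}^{\otimes k_i}, \ldots, x_t);
\end{align*}
both sides equal $d n^{k_i-1}\prod_{j\ne i}\dotp{x_j}{\mathbf{1}^{\otimes k_j}}$, the left side by an easy induction on $k_i$ that uses $d$-coregularity to count ordered $k_i$-tuples extending a fixed $(k-k_i)$-multiset to an edge. Equivalently $\sigma$ vanishes whenever any argument is the all-ones tensor, and by multilinear expansion $\sigma(x_1, \ldots, x_t) = \tau_{\vec{\pi}}(\Pi x_1, \ldots, \Pi x_t)$, where $\Pi$ denotes orthogonal projection $W^{\otimes k_j} \to V_j := (\mathbf{1}^{\otimes k_j})^\perp$.

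An induction on $s$ now propagates this to $\sigma^{2^s}(w_1, \ldots, w_{t-s}) = \tau_{\vec{\pi}}^{2^s}(\Pi^{\otimes 2^s} w_1, \ldots, \Pi^{\otimes 2^s} w_{t-s})$. The inductive step uses an orthonormal basis of $W^{\otimes k_{t-s}\, 2^s}$ adapted to $V_{t-s}^{\otimes 2^s} \oplus (V_{t-s}^{\otimes 2^s})^\perp$, so that the sum defining $\sigma^{2^{s+1}}$ agrees with the sum defining $\tau_{\vec{\pi}}^{2^{s+1}}$ except on basis vectors with an all-ones factor; these leftover terms vanish by a companion invariant, proved alongside by induction, that $\tau_{\vec{\pi}}^{2^s}$ is zero whenever one of its arguments has an all-ones factor and at least one other argument lies in $V_j^{\otimes 2^s}$. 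At $s = t-1$, unfolding the definition of $A[\cdot]$ then gives the matrix identity $A[\sigma^{2^{t-1}}] = \tilde{\Pi}\, A[\tau_{\vec{\pi}}^{2^{t-1}}]\, \tilde{\Pi}$, where $\tilde{\Pi} = \Pi^{\otimes 2^{t-2}}$ is the orthogonal projection $W^{\otimes k_1 2^{t-2}} \to V_1^{\otimes 2^{t-2}}$.

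The matrix $A[\tau_{\vec{\pi}}^{2^{t-1}}]$ has nonnegative entries (it counts labeled possibly degenerate steps by Corollary~\ref{cor:powerscountwalks}), and for $d$-coregular $H$ a direct computation from the identity of paragraph two shows $A[\tau_{\vec{\pi}}^{2^{t-1}}]\, u = (dn^{k/2-1})^{2^{t-1}}\, u$, where $u \in W^{\otimes k_1 2^{t-2}}$ is the unit all-ones tensor; by Perron--Frobenius this is the top eigenvalue. Since $\tilde{\Pi} u = 0$ and $V_1^{\otimes 2^{t-2}} \subseteq u^\perp$, the variational characterization gives
\begin{align*}
\lambda_1(A[\sigma^{2^{t-1}}]) = \max_{\substack{w \in V_1^{\otimes 2^{t-2}} \\ \|w\|=1}} w^T A[\tau_{\vec{\pi}}^{2^{t-1}}]\, w \;\leq\; \max_{\substack{w \perp u \\ \|w\|=1}} w^T A[\tau_{\vec{\pi}}^{2^{t-1}}]\, w = \lambda_2(A[\tau_{\vec{\pi}}^{2^{t-1}}]),
\end{align*}
which completes the proof. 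The main obstacle will be the joint induction for the projection identity and its companion invariant, since the bookkeeping for the basis decompositions and the careful tracking of which argument has an all-ones factor in the recursive $\ast$-expansion is the most intricate part.
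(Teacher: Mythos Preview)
Your argument is correct, but the paper's proof is considerably shorter and avoids the joint induction entirely. The trick is to decompose only the \emph{first} argument and to apply Lemma~\ref{lem:largesteigpower} to $\tau_{\vec{\pi}}$ rather than to $\sigma$. Concretely: choose unit vectors $x_1,\dots,x_t$ maximizing $|\sigma(x_1,\dots,x_t)|$ and write $x_1 = \alpha y + \beta\hat{1}$ with $y \perp \hat{1}$. The identity you proved in your second paragraph gives $\sigma(\hat{1},x_2,\dots,x_t)=0$, and since $J_{\vec{\pi}}(y,x_2,\dots,x_t)=0$ one obtains $\lambda_{2,\pi}(H) = |\alpha|\,|\tau_{\vec{\pi}}(y,x_2,\dots,x_t)|$. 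Now the point is that the conclusion of \eqref{eq:largesteigpower} depends only on the first argument, so applying it to $\tau_{\vec{\pi}}$ with inputs $y,x_2,\dots,x_t$ yields
\[
  |\tau_{\vec{\pi}}(y,x_2,\dots,x_t)|^{2^{t-1}} \le \bigl|A[\tau_{\vec{\pi}}^{2^{t-1}}](y^{\otimes 2^{t-2}},\,y^{\otimes 2^{t-2}})\bigr|.
\]
Since $y \perp \hat{1}$ forces $y^{\otimes 2^{t-2}}$ to be orthogonal to the all-ones tensor, and (by the same Perron--Frobenius argument you give) the all-ones tensor is the top eigenvector of $A[\tau_{\vec{\pi}}^{2^{t-1}}]$, the right-hand side is at most $\lambda_2(A[\tau_{\vec{\pi}}^{2^{t-1}}])$ and the proof is finished.

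What your route buys is the genuinely stronger structural statement $A[\sigma^{2^{t-1}}] = \tilde{\Pi}\,A[\tau_{\vec{\pi}}^{2^{t-1}}]\,\tilde{\Pi}$, which may be of independent interest; the paper's route buys a two-line argument that completely sidesteps the inductive bookkeeping you flagged as the main obstacle. Both approaches hinge on the same $d$-coregular identity $\tau_{\vec{\pi}}(\dots,\mathbf{1}^{\otimes k_i},\dots) = (d/n)\,J_{\vec{\pi}}(\dots,\mathbf{1}^{\otimes k_i},\dots)$ and the same Perron--Frobenius step; the difference is only in where Lemma~\ref{lem:largesteigpower} is invoked. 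One small point: in your final display, $\lambda_1(\cdot)$ and $\lambda_2(\cdot)$ should be understood in absolute value, so the two occurrences of $w^T A[\tau_{\vec{\pi}}^{2^{t-1}}]\,w$ should carry absolute value signs.
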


\begin{proof} 
Let $\vec{\pi} = (k_1,\dots,k_t)$ and let $x_1, \dots, x_t$ be unit length vectors maximizing
$\tau_{\vec{\pi}} - \frac{d}{n} J_{\vec{\pi}}$ in absolute value, so that $\lambda_{2,\pi}(H) =
\left| (\tau_{\vec{\pi}} - \frac{d}{n}J_{\vec{\pi}})(x_1,\dots,x_t) \right|$.  Write $x_1 = \alpha y
+ \beta \hat{1}$, where $y$ is a unit length vector perpendicular to the all-ones vector, $\hat{1}$
is the all-ones vector scaled to unit length, and $\alpha, \beta \in \mathbb{R}$ with $\alpha^2 +
\beta^2 = 1$.  Let $W$ be the vector space over $\mathbb{R}$ of dimension $n$ and for $1 \leq i \leq
t$ let $e_{i,1}, \dots, e_{i,n^{k_i}}$ be the standard basis of $W^{\otimes k_i}$.  Since $H$ is
$d$-coregular,
\begin{align*}
  \tau_{\vec{\pi}}(\hat{1},x_2,\dots,x_t) &=
  \frac{1}{n^{k/2}} 
  \sum_{1 \leq j_2 \leq n^{k_2}} \cdots \sum_{1 \leq j_t \leq n^{k_t}}
  \dotp{e_{2,j_2}}{x_2} \cdots \dotp{e_{t,j_t}}{x_t}
  \sum_{1 \leq j_1 \leq n^{k_1}} \tau_{\vec{\pi}}(e_{1,j_1}, \dots, e_{t,j_t})
  \\ &=
  \frac{1}{n^{k/2}} 
  \sum_{1 \leq j_2 \leq n^{k_2}} \cdots \sum_{1 \leq j_t \leq n^{k_t}}
  \dotp{e_{2,j_2}}{x_2} \cdots \dotp{e_{t,j_t}}{x_t}
  d n^{k_1-1} J_{\vec{\pi}}(e_{1,1}, e_{2,j_2}, \dots, e_{t,j_t})
  \\ &=
  \frac{d}{n} J_{\vec{\pi}}(\hat{1}, x_2, \dots, x_t).
\end{align*}
Next, $J_{\vec{\pi}}(y,x_2,\dots,x_t) = \dotp{1}{y} \dotp{1}{x_2} \cdots \dotp{1}{x_t}$.  Since $y$
is perpendicular to the all-ones vector, $J_{\vec{\pi}}(y,x_2,\dots,x_t) = 0$.  Therefore, using
linearity,
\begin{align*}
  \lambda_{2,\pi}(H) = \left| \left(\tau_{\vec{\pi}} - \frac{d}{n}J_{\vec{\pi}}\right)(\alpha y + \beta \hat{1},
  x_2,\dots,x_t) \right| = \left| \alpha \right| \left| \tau_{\vec{\pi}}(y, x_2,\dots,x_t) \right|.
\end{align*}
By \eqref{eq:largesteigpower} applied to $\tau_{\vec{\pi}}(y,x_2,\dots,x_t)$, $\lambda_{2,\pi}(H)
\leq \left| \alpha \right| | A[\tau_{\vec{\pi}}^{2^{t-1}}](y\otimes \dots \otimes y, y\otimes \dots
\otimes y)|^{2^{-t+1}}$.  Since $H$ is $d$-coregular, the number of steps of type $\vec{\pi}$ with a
fixed attach tuple $A^{(0)}$ is independent of the choice of $A^{(0)}$.  By
Corollary~\ref{cor:powerscountwalks}, each row of the matrix $A[\tau_{\vec{\pi}}^{2^{t-1}}]$
cooresponds to an attach tuple $A^{(0)}$ and the sum of the entries in that row counts the number of
steps of type $\vec{\pi}$ with fixed attach tuple $A^{(0)}$.  Therefore, each row of
$A[\tau_{\vec{\pi}}^{2^{t-1}}]$ sums to the same value so that the Perron-Frobenius Theorem implies
that the all-ones vector is the eigenvector associated to
$\lambda_1(A[\tau_{\vec{\pi}}^{2^{t-1}}])$.  Since $y \otimes \dots \otimes y$ is perpendicular to
the all-ones vector and $A[\tau_{\vec{\pi}}^{2^{t-1}}]$ is a square real symmetric matrix,
$|A[\tau_{\vec{\pi}}^{2^{t-1}}](y\otimes\dots\otimes y, y\otimes \dots \otimes y)| \leq
\lambda_2(A[\tau_{\vec{\pi}}^{2^{t-1}})$.  Since $\left| \alpha \right| \leq 1$, the proof is
complete.
\end{proof} 

\begin{proof}[\prooftext that {\texttt{Cycle}$_{4\ell}$[$\pi$]} $\Rightarrow$ \propeig{$\pi$}] 
\label{proof:c4toeig}
Let $\mathcal{H} = \{H_n\}_{n \rightarrow \infty}$ be a sequence of $d$-coregular, $k$-uniform
hypergraphs with loops and let $\tau_n$ be the adjacency map of $H_n$.  For notational convenience,
the subscript on $n$ is dropped below.  Let $\vec{\pi}$ be any ordering of the entries of $\pi$.
Let $m = |E(C_{\pi,4\ell})| = 2\ell 2^{t-1}$ and note that $|V(C_{\pi,4\ell})| = mk/2$ since
$C_{\pi,4\ell}$ is two-regular.  The matrix $A = A[\tau_{\vec{\pi}}^{2^{t-1}}]$ is a square
symmetric real valued matrix, so let $\mu_1,\dots, \mu_r$ be the eigenvalues of $A$ arranged so that
$|\mu_1| \geq \dots \geq |\mu_r|$, where $r = \dim(A)$.  The eigenvalues of $A^{2\ell}$ are
$\mu_1^{2\ell}, \dots, \mu_d^{2\ell}$ and the trace of $A^{2\ell}$ is $\sum_i \mu_i^{2\ell}$.  Since
all $\mu_i^{2\ell} \geq 0$, Proposition~\ref{prop:powerscountcycles} and \propcycle[4\ell]{$\pi$}
imply that
\begin{align} \label{eq:c4toeigmain}
  \mu_1^{2\ell} \leq \mu_1^{2\ell} + \mu_2^{2\ell} 
  &\leq \tr{A^{2\ell}}
  \stocnla = \#\{\text{possibly degen } C_{\pi,4\ell} \,\, \text{in } H_n\} 
  \stocnla \leq p^m n^{mk/2} + o(n^{mk/2}).
\end{align}
Since $pn^{k/2} = dn^{k/2-1} = \tau_{\vec{\pi}}(\hat{1},\dots,\hat{1}) \leq \left\lVert
\tau_{\vec{\pi}} \right\rVert = \lambda_{1,\pi}(H)$, \eqref{eq:lambdaoneboundsspectralnorm} implies
that $\mu_1 \geq p^{2^{t-1}}n^{k2^{t-2}}$ which implies equality up to $o(n^{mk/2})$ throughout
\eqref{eq:c4toeigmain}.  Therefore, $\mu_2 = o(n^{k2^{t-2}})$ so that Corollary~\ref{cor:secondeig}
shows that $\lambda_{2,\pi}(H) = o(n^{k/2})$, completing the proof.
\end{proof} 

\medskip \emph{Acknowledgements.} The authors would like to thank Vojt\u{e}ch R\"odl, Mathias Schacht,
and the referees for helpful discussion and feedback.

\bibliographystyle{abbrv}
\bibliography{refs.bib}


\end{document}